\numberwithin{equation}{section}
\numberwithin{remark}{section}
\numberwithin{theorem}{section}
\numberwithin{proposition}{section}
\numberwithin{lemma}{section}
\numberwithin{definition}{section}
\begin{document}

\title{A structure-preserving parametric finite element method with optimal energy stability condition for anisotropic surface diffusion 
}

\titlerunning{A SP-PFEM with optimal energy stability condition}  

\author{Yifei Li   \and
        Wenjun Ying  \and
        Yulin Zhang 
}


\institute{Y.~Li \at
              Mathematisches Institut, Universit\"{a}t T\"{u}bingen, Auf der Morgenstelle 10, 72076, T\"{u}bingen, Germany \\
              \email{yifei.li@mnf.uni-tuebingen.de}           
           \and
           W.~Ying \at
              School of Mathematical Sciences and Institute of Natural Sciences, Shanghai Jiao Tong University, Shanghai 200240, P. R. China  \\
              \email{wying@sjtu.edu.cn}   
           \and
           Y.~Zhang \at
              School of Mathematical Sciences and Institute of Natural Sciences, Shanghai Jiao Tong University, Shanghai 200240, P. R. China\\
              \email{yulin.zhang@sjtu.edu.cn}
}

\date{Received: date / Accepted: date}

\maketitle

\begin{abstract}
 We propose and analyze a structure-preserving parametric finite element method (SP-PFEM) for the evolution of closed curves under anisotropic surface diffusion with surface energy density $\hat{\gamma}(\theta)$. Our primary theoretical contribution establishes that the condition $3\hat{\gamma}(\theta)-\hat{\gamma}(\theta-\pi)\geq 0$ is both necessary and sufficient for unconditional energy stability within the framework of local energy estimates. The proposed method introduces a symmetric surface energy matrix $\hat{\boldsymbol{Z}}_k(\theta)$ with a stabilizing function $k(\theta)$, leading to a conservative weak formulation. Its fully discretization via SP-PFEM rigorously preserves the two geometric structures: enclosed area conservation and energy dissipation unconditionally under our energy stability condition. Numerical results are reported to demonstrate the efficiency and accuracy of the proposed method, along with its area conservation and energy dissipation properties.
\keywords{geometric flows \and parametric finite element method \and anisotropy surface energy \and structure-preserving \and optimal condition}
\subclass{65M60 \and 65M12 \and 35K55 \and 53C44}
\end{abstract}

\section{Introduction}\label{sec:intro}

\paragraph{Background}
\textbf{Anisotropic surface diffusion} is a fundamental kinetic process in materials science, characterized by the spatially anisotropic mass transport of atoms, molecules and atomic clusters along solid material surfaces, where the directional dependence is governed by the underlying lattice structure \cite{oura2013surface}. This phenomenon has garnered increasing attention in various fields of surface/materials science, such as heterogeneous catalysis \cite{randolph2007controlling}, epitaxial growth of thin films \cite{gurtin2002interface,fonseca2014shapes}, and crystal growth of nanomaterials \cite{gilmer1972simulation,gomer1990diffusion}. Furthermore, the anisotropic surface diffusion finds numerous applications in fields such as computational geometry and solid-state physics, spanning areas like image processing \cite{clarenz2000anisotropic}, quantum dot manufacturing \cite{fonseca2014shapes} and solid-state dewetting \cite{jiang2016solid,jiang2012phase,jiang2018solid,jiang2020sharp,srolovitz1986capillary,thompson2012solid,wang2015sharp,ye2010mechanisms}.

Let $\Gamma\coloneqq\Gamma(t)\subset\mathbb{R}^2$ be an evolving closed two-dimensional (2D) curve parameterized by $\boldsymbol{X}=\boldsymbol{X}(s,t)\coloneqq(x(s,t),y(s,t))^T$, where $s$ represents the arc-length parameter and $t$ denotes time. We denote the unit outward normal vector by $\boldsymbol{n}=\boldsymbol{n}(\theta)\coloneqq(-\sin\theta,\cos\theta)^T$ and the corresponding unit tangent vector by $\boldsymbol{\tau}=\boldsymbol{\tau}(\theta)\coloneqq(\cos\theta,\sin\theta)^T$, where $\theta\in 2\pi\mathbb{T}\coloneqq\mathbb{R}/2\pi\mathbb{Z}$ represents the angle between the vertical axis and $\boldsymbol{n}$, see Fig.~\ref{fig:illust of surf diffusion}. To characterize the direction-dependent effect, an anisotropic surface energy density $\hat{\gamma}(\theta)>0$ is introduced. Subsequently, the total free energy of $\Gamma$ is defined as:
\begin{equation}\label{eqn:tot free energy}
    W_c(\Gamma)\coloneqq \int_{\Gamma}\hat{\gamma}(\theta)\,\mathrm{d}s.
\end{equation}
\begin{figure}
    \centering
    \includegraphics[width=0.618\linewidth]{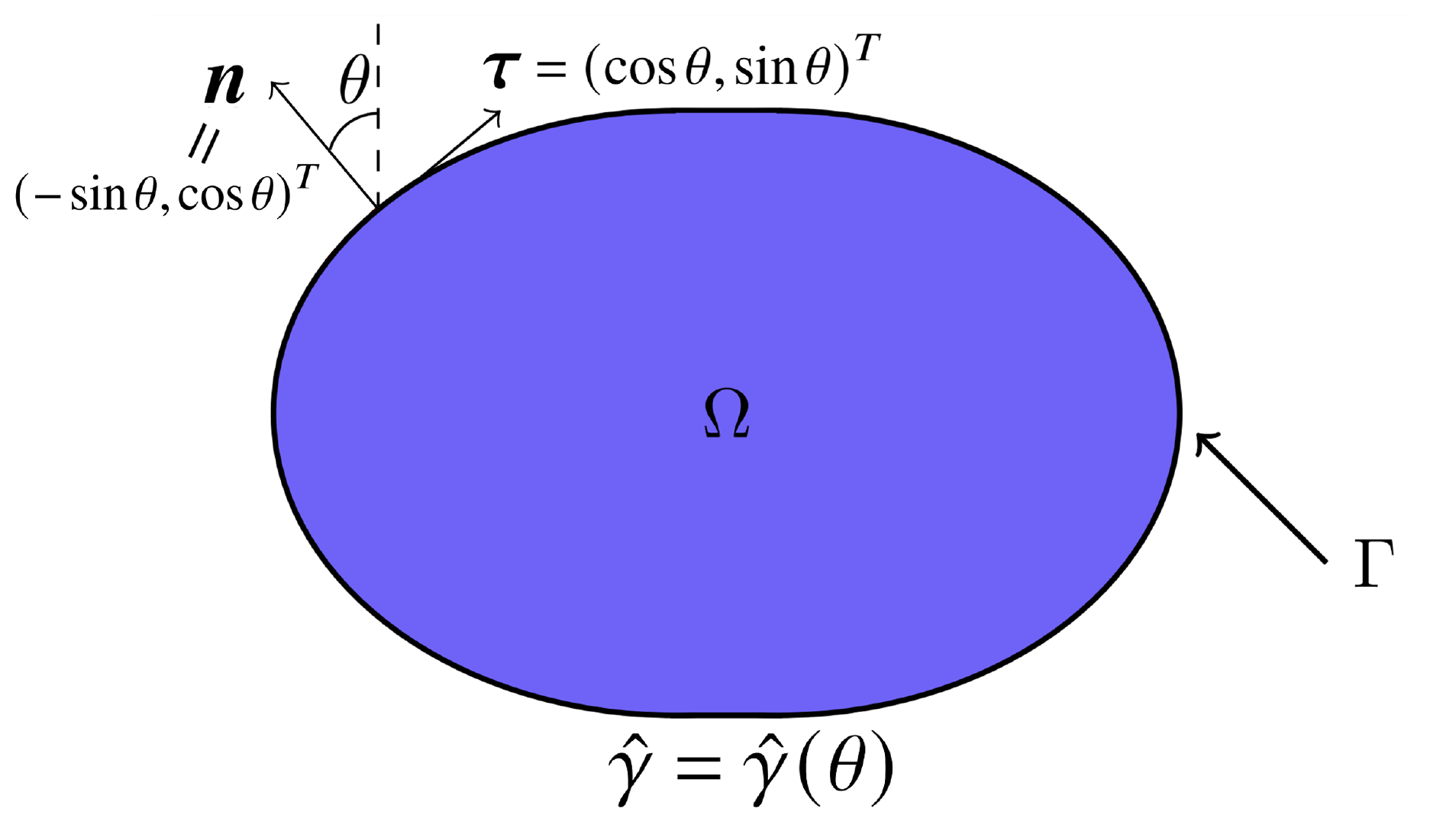}
    \caption{An illustration of a closed curve under anisotropic surface diffusion with surface energy $\hat{\gamma}(\theta)$, while $\theta$ is the angle between the $y$-axis and the unit outward normal vector $\boldsymbol{n}=\boldsymbol{n}(\theta)\coloneqq(-\sin\theta,\cos\theta)^T$. $\boldsymbol{\tau}=\boldsymbol{\tau}(\theta)\coloneqq(\cos\theta,\sin\theta)^T$ represents the unit tangent vector.}
    \label{fig:illust of surf diffusion}
\end{figure}

Following \cite{taylor1992ii,bao2024unified}, the evolution of $\Gamma$ under anisotropic surface diffusion is derived as the $H^{-1}$-gradient flow of $W_c$ \eqref{eqn:tot free energy}, which is formulated as:
\begin{equation}\label{eqn:surf diffusion}
    \partial_t\boldsymbol{X}=(\partial_{ss}\mu)\boldsymbol{n},
\end{equation}
where $\mu$ is the weighted curvature. More precisely, $\mu$ is defined by the functional derivative of $W_c(\Gamma)$ with respect to $\Gamma$ as
\begin{equation*}
    \mu\coloneqq\frac{\delta W_c(\Gamma)}{\delta\Gamma}=\lim_{\varepsilon\to 0}\frac{W_c(\Gamma^\varepsilon)-W_c(\Gamma)}{\varepsilon},
\end{equation*}
with $\Gamma^\varepsilon$ representing a small perturbation of $\Gamma$. Based on the $\hat{\gamma}(\theta)$ formulation, the weighted curvature $\mu$ also admits an explicit representation \cite{jiang2020sharp} in terms of the anisotropic surface energy density $\hat{\gamma}(\theta)$ as: 
\begin{equation}\label{eqn:weighted curvature}
    \mu=\left[\hat{\gamma}(\theta)+\hat{\gamma}^{\prime\prime}(\theta)\right]\kappa,
\end{equation}
here $\kappa\coloneqq -(\partial_{ss}\boldsymbol{X})\cdot\boldsymbol{n}$ denoting the curvature. In case of no directional dependence, i.e., $\hat{\gamma}(\theta)\equiv 1$, the weighted curvature $\mu$ reduces to $\kappa$, and equation \eqref{eqn:surf diffusion} goes to the (isotropic) surface diffusion.

The evolution equation \eqref{eqn:surf diffusion} of the anisotropic surface diffusion, being a fourth-order geometric flow, has two fundamental geometric properties: (i) the conservation of the enclosed area $A_c(t)$ by $\Gamma(t)$, and (ii) the dissipation of the total free energy $W_c(t)$. It is desirable to develop a numerical method that can preserve these geometric properties.

Different methods have been conducted on the numerical approximations of isotropic/anisotropic curvature flows over the past several decades. For example, the marker particle method \cite{du2010tangent, mayer2001numerical,wong2000periodic}, the discontinuous Galerkin method \cite{xu2009local}, the $\theta-L$ formulation method \cite{huang2021theta}, the phase-field method \cite{du2020phase,garcke2023diffuse,jiang2012phase,tang2020efficient}, the evolving surface element methods \cite{kovacs2021convergent,gui2022convergence,dziuk2007finite,duan2024new} and the parametric finite element method (PFEM) \cite{bansch2005finite,barrett2007parametric,barrett2008parametric,barrett2020parametricbook,li2021energy,bao2021structure,bao2017parametric,jiang2016solid,barrett2008variational}. Among these approaches, the PFEM presents significant theoretical advantages from the structure-preserving perspective. The energy-stable PFEM (ES-PFEM) proposed by Barret, Garcke, and N{\"u}rnberg \cite{barrett2007parametric,barrett2008variational,barrett2008parametric}, also termed the BGN method, established the first rigorous proof for preserving energy stability in isotropic surface diffusion. Subsequently, Bao and Zhao built upon this work to propose a structure-preserving PFEM (SP-PFEM) that simultaneously maintains energy stability and area conservation at the fully discrete level \cite{bao2021structure,bao2023structure,bao2022volume}.

The extension of these structure-preserving PFEMs to anisotropic surface energies originated with a series of works by Barret, Garcke, and N{\"u}rnberg \cite{barrett2008variational,barrett2008numerical}, where they successfully adapted the energy stability to cases with a specific Riemannian-like metric form of surface energy. In \cite{li2021energy}, Bao and Li constructed a surface energy matrix $G(\theta)$, which extended the ES-PFEM from the specific forms to a much broader class of anisotropic surface energies. However, the energy stability conditions are relatively complex and restrictive.

A significant advancement in the extension of the SP-PFEM to anisotropic surface energies is adding a stabilizing functions $k(\theta)$ in the surface energy matrix. Building upon this advancement, Bao and Li \cite{bao2024structure} established an analytical framework, demonstrating that energy stability follows from the satisfaction of a local energy estimates, where the stabilizing function $k(\theta)$ is required to greater than a bounded minimal stabilizing function $k_0(\theta)$. In fact, the energy stability proofs in all existing structure-preserving/energy-stable PFEMs \cite{bao2024unified,zhang2024stabilized,li2021energy,bao2024curved,li2024structure} can be recast within this framework. Consequently, the different energy stability conditions on the surface energy $\hat{\gamma}$ emerge from the choice of surface energy matrix and the analytical techniques employed in establishing these local energy estimates. For instance, in the symmetrized SP-PFEM \cite{bao2023symmetrized2D,bao2023symmetrized3D}, the author introduced a symmetrized surface energy $\boldsymbol{Z}_k$, and proved the stability condition $\hat{\gamma}(\theta)=\hat{\gamma}(\theta-\pi)$  through an application of Cauchy's inequality. The minimal stabilizing functions $k_0(\theta)$ are estimated for several $\hat{\gamma}(\theta)$. Subsequently, by introducing another surface energy matrix $\boldsymbol{G}_k$, the works in \cite{bao2024structure,bao2024unified,bao2024curved,li2024structure} achieved an improved stability condition $3\hat{\gamma}(\theta) - \hat{\gamma}(\theta-\pi) > 0$ via refined analytical techniques. Very recently, the energy stability condition is improved to $3\hat{\gamma}(\theta) - \hat{\gamma}(\theta-\pi) \geq 0$ and $\hat{\gamma}'(\theta^*)=0$ when $3\hat{\gamma}(\theta^*) - \hat{\gamma}(\theta^*-\pi) = 0$ \cite{zhang2024stabilized}, yet the explicit characterization of $k_0(\theta)$ remained unknown. 

On the other hand, the analysis in \cite{bao2024unified} initially established for the surface energy matrix $\boldsymbol{G}_k$ that
\begin{equation}\label{eqn:energy stab cond b}
3\hat{\gamma}(\theta)-\hat{\gamma}(\theta-\pi)\geq 0,\qquad\forall\theta\in 2\pi\mathbb{T}
\end{equation}
serves as a necessary condition for the local energy estimate. Inspired by their proof, our Remark~\ref{rem:indep of Z} demonstrates that this condition is further independent of the specific construction of surface energy matrices $\boldsymbol{G}_k$ or $\boldsymbol{Z}_k$. Therefore, the energy stability condition in \cite{zhang2024stabilized} is almost optimal except for the extra condition $\hat{\gamma}'(\theta^*)=0$. This naturally raises three fundamental questions: (1) whether this necessary condition is also sufficient for energy stability, (2) if so, which surface energy matrix, coupled with appropriate analytical techniques, would achieve this optimal energy stability condition, and (3) how to explicitly characterize the minimal stabilizing function $k_0(\theta)$.

\paragraph{Main results} In this paper, we propose and analyze a structure-preserving PFEM for simulating anisotropic surface diffusion in two dimensions. The proposed SP-PFEM preserves the area conservation and achieves unconditional energy stability under the energy stability condition \eqref{eqn:energy stab cond b}, without any additional constraints necessary. This establishes the sufficiency of the necessary condition \eqref{eqn:energy stab cond b}, thereby showing its optimality within the framework of local energy estimates.

\paragraph{Structure of the paper} In Section~\ref{sec:conservative form}, we propose a conservative form and derive a weak formulation for anisotropic surface diffusion by introducing a symmetric surface energy matrix $\hat{\boldsymbol{Z}}_k(\theta)$. In Section~\ref{sec:SP-PFEM}, a full discretization by a PFEM is presented for the weak formulation. Meanwhile, we state the structure-preserving property of the method. In Section~\ref{sec:loc energy est}, a minimal stabilizing function $k_0(\theta)$ is defined. Assuming its existence, we establish the local energy estimate, thereby proving the energy stability of the proposed PFEM. Section~\ref{sec:existence of k0} provides a detailed proof for existence of $k_0(\theta)$. Furthermore, we formulate a sharp estimate lemma and derive a global upper bound for $k_0(\theta)$. Section~\ref{sec:numer} contains numerous numerical results to validate the accuracy, efficiency, and the structure-preserving property of the proposed PFEM. Finally, we summarize some conclusions in Section~\ref{sec:conclusion}.

\section{Conservative form and weak formulation}\label{sec:conservative form}

\subsection{Conservative form}\label{subsec:conservative form}

In order to derive a weak formulation for the anisotropic surface diffusion \eqref{eqn:surf diffusion}--\eqref{eqn:weighted curvature}, the following surface energy matrix is introduced: 

\begin{definition}[symmetric surface energy matrix]
    The symmetric surface energy matrix $\hat{\boldsymbol{Z}}_k(\theta)$ is given as \begin{equation}\label{def:surf energy mat}
     \begin{aligned}
        \hat{\boldsymbol{Z}}_k(\theta)&\coloneqq\begin{pmatrix}
        \hat{\gamma}(\theta)-\hat{\gamma}^\prime(\theta)\sin 2\theta & \hat{\gamma}^\prime(\theta)\cos 2\theta\\[0.5em]
        \hat{\gamma}^\prime(\theta)\cos 2\theta & \hat{\gamma}(\theta)+\hat{\gamma}^\prime(\theta)\sin 2\theta
    \end{pmatrix}\\
    &\quad+k(\theta)\begin{pmatrix}
        \sin^2\theta & -\cos\theta\sin\theta\\[0.5em]
        -\cos\theta\sin\theta & \cos^2\theta
    \end{pmatrix},\qquad\forall\theta\in 2\pi\mathbb{T}.
    \end{aligned}
    \end{equation}
\end{definition} Here, $k:2\pi\mathbb{T}\to\mathbb{R}_{\geq 0}$ is a non-negative stabilizing function that can be prespecified.

\begin{theorem}
    With the surface energy matrix \eqref{def:surf energy mat}, the following geometric identity holds: \begin{equation}\label{eqn:geo id}
        \mu\boldsymbol{n}+\partial_s\Bigl(\hat{\boldsymbol{Z}}_k(\theta)\partial_s\boldsymbol{X}\Bigr)=\boldsymbol{0}.
    \end{equation}
\end{theorem}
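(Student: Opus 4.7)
The plan is to exploit the structure of $\hat{\boldsymbol{Z}}_k(\theta)$ to reduce $\hat{\boldsymbol{Z}}_k(\theta)\,\partial_s\boldsymbol{X}$ to an explicit combination of $\boldsymbol{\tau}$ and $\boldsymbol{n}$, and then differentiate using the Frenet-type identities on $\Gamma$. Since $\boldsymbol{X}$ is arc-length parametrized, $\partial_s\boldsymbol{X}=\boldsymbol{\tau}(\theta)$, so everything reduces to computing $\hat{\boldsymbol{Z}}_k(\theta)\boldsymbol{\tau}(\theta)$ and its $s$-derivative. The strategy will work term by term: first the ``unstabilized'' matrix (the $\hat{\gamma},\hat{\gamma}'$ block) and then the $k(\theta)$ correction.

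First I would rewrite the two matrices appearing in \eqref{def:surf energy mat} in terms of $\boldsymbol{n}$ and $\boldsymbol{\tau}$. A direct expansion using $\sin 2\theta=2\sin\theta\cos\theta$ and $\cos 2\theta=\cos^{2}\theta-\sin^{2}\theta$ gives
\begin{equation*}
\hat{\boldsymbol{Z}}_k(\theta)=\hat{\gamma}(\theta)\,\boldsymbol{I}+\hat{\gamma}'(\theta)\bigl(\boldsymbol{n}\boldsymbol{\tau}^{T}+\boldsymbol{\tau}\boldsymbol{n}^{T}\bigr)+k(\theta)\,\boldsymbol{n}\boldsymbol{n}^{T}.
\end{equation*}
The recognition that the stabilizing block equals $\boldsymbol{n}\boldsymbol{n}^{T}$ is the key simplification, because $\boldsymbol{n}\cdot\boldsymbol{\tau}=0$ forces $k(\theta)\,\boldsymbol{n}\boldsymbol{n}^{T}\boldsymbol{\tau}=\boldsymbol{0}$; hence the stabilizing term is automatically invisible to $\partial_s\boldsymbol{X}$, which is exactly what one wants from a stabilizer. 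The remaining action on $\boldsymbol{\tau}$ collapses to
\begin{equation*}
\hat{\boldsymbol{Z}}_k(\theta)\,\partial_s\boldsymbol{X}=\hat{\gamma}(\theta)\,\boldsymbol{\tau}+\hat{\gamma}'(\theta)\,\boldsymbol{n}.
\end{equation*}

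Next I would differentiate with respect to $s$ using the Frenet relations consistent with the paper's conventions, namely $\partial_s\boldsymbol{\tau}=(\partial_s\theta)\boldsymbol{n}$, $\partial_s\boldsymbol{n}=-(\partial_s\theta)\boldsymbol{\tau}$, and $\partial_s\theta=-\kappa$ (the last coming from $\kappa=-(\partial_{ss}\boldsymbol{X})\cdot\boldsymbol{n}$). Applying $\partial_s$ to the identity just obtained yields, via the chain rule,
\begin{equation*}
\partial_s\bigl(\hat{\boldsymbol{Z}}_k(\theta)\,\partial_s\boldsymbol{X}\bigr)=(\partial_s\theta)\bigl[\hat{\gamma}'\boldsymbol{\tau}+\hat{\gamma}\,\boldsymbol{n}\bigr]+(\partial_s\theta)\bigl[\hat{\gamma}''\boldsymbol{n}-\hat{\gamma}'\boldsymbol{\tau}\bigr]=(\partial_s\theta)(\hat{\gamma}+\hat{\gamma}'')\boldsymbol{n}.
\end{equation*}
The cancellation of the $\boldsymbol{\tau}$-components is the structural reason the construction works; it is precisely where the off-diagonal piece $\hat{\gamma}'(\boldsymbol{n}\boldsymbol{\tau}^{T}+\boldsymbol{\tau}\boldsymbol{n}^{T})$ has been tuned. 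Substituting $\partial_s\theta=-\kappa$ and using \eqref{eqn:weighted curvature} gives $\partial_s(\hat{\boldsymbol{Z}}_k\,\partial_s\boldsymbol{X})=-\mu\,\boldsymbol{n}$, which rearranges to \eqref{eqn:geo id}.

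The proof is essentially algebraic, so the only real obstacle is bookkeeping: choosing the decomposition that makes the $k$-term drop out (writing the stabilizing matrix as $\boldsymbol{n}\boldsymbol{n}^{T}$ rather than in Cartesian form) and correctly handling the sign of $\partial_s\theta$ coming from the paper's curvature convention. Once these two points are sorted out, the identity follows without further input, and in particular holds for \emph{every} admissible stabilizer $k(\theta)\ge 0$, which is what later sections of the paper will exploit.
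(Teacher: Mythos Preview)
Your proof is correct and follows essentially the same route as the paper: decompose $\hat{\boldsymbol{Z}}_k(\theta)$ so that its action on $\boldsymbol{\tau}$ reduces to $\hat{\gamma}\boldsymbol{\tau}+\hat{\gamma}'\boldsymbol{n}$ (with the $k(\theta)\boldsymbol{n}\boldsymbol{n}^T$ block annihilated), then identify $\partial_s$ of this with $-\mu\boldsymbol{n}$. The only minor differences are that you write the paper's matrix $L_\theta$ in the cleaner rank-one form $\boldsymbol{n}\boldsymbol{\tau}^T+\boldsymbol{\tau}\boldsymbol{n}^T$, and you carry out the Frenet differentiation explicitly, whereas the paper cites the identity $\mu\boldsymbol{n}=-\partial_s(\hat{\gamma}\boldsymbol{\tau}+\hat{\gamma}'\boldsymbol{n})$ from prior work; your version is thus slightly more self-contained.
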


\begin{proof}
    First, recall Fig.~\ref{fig:illust of surf diffusion} that $\boldsymbol{n}=(-\sin\theta,\cos\theta)^T$ and $\boldsymbol{\tau}\coloneqq\partial_s\boldsymbol{X}=(\cos\theta,\sin\theta)^T$.

    Similar to derivations in \cite[Theorem 2.1]{zhang2024stabilized} or \cite[(2.10)]{li2021energy}, we have \begin{equation}\label{eqn:weighted curvature vec}
        \mu\boldsymbol{n}=-\partial_s\Bigl(\hat{\gamma}(\theta)\partial_s\boldsymbol{X}+\hat{\gamma}^\prime(\theta)\boldsymbol{n}\Bigr).
    \end{equation} Note that $\boldsymbol{n}^T\partial_s\boldsymbol{X}=\boldsymbol{n}\cdot\boldsymbol{\tau}\equiv 0$, thus $\partial_s\left(k(\theta)\boldsymbol{n}\boldsymbol{n}^T\partial_s\boldsymbol{X}\right)$ vanishes. Thus, \eqref{eqn:weighted curvature vec} can be reformulated as \begin{equation}\label{eqn:mu*n}
        \mu\boldsymbol{n}=-\partial_s\Bigl(\hat{\gamma}(\theta)\partial_s\boldsymbol{X}+\hat{\gamma}^\prime(\theta)\boldsymbol{n}+k(\theta)\boldsymbol{n}\boldsymbol{n}^T\partial_s\boldsymbol{X}\Bigr).
        \end{equation}
    
    On the other hand, denote \begin{equation}
        L_\theta\coloneqq\begin{pmatrix}
            -\sin2\theta & \cos2\theta\\[0.5em]
            \cos2\theta & \sin 2\theta
        \end{pmatrix},
    \end{equation} and acts it on the tangent vector $\partial_s\boldsymbol{X}=\boldsymbol{\tau}$,  \begin{equation}
        \begin{aligned}
            L_\theta\partial_s\boldsymbol{X}&=\begin{pmatrix}
                -\sin 2\theta & \cos 2\theta\\[0.5em]
                \cos 2\theta & \sin 2\theta
            \end{pmatrix}\begin{pmatrix}
                \cos\theta\\[0.5em]
                \sin\theta
            \end{pmatrix}=\begin{pmatrix}
                -\sin\theta\\[0.5em]
                \cos\theta
            \end{pmatrix}=\boldsymbol{n}.
        \end{aligned}
    \end{equation}
    Therefore, 
    \begin{equation}\label{eqn:Gk*tau}
        \begin{aligned}
                \hat{\boldsymbol{Z}}_k(\theta)\partial_s\boldsymbol{X}&=\Bigl(\hat{\gamma}(\theta)I_2+\hat{\gamma}^\prime(\theta)L_\theta+k(\theta)\boldsymbol{n}\boldsymbol{n}^T\Bigr)\partial_s\boldsymbol{X}\\
                &=\hat{\gamma}(\theta)\partial_s\boldsymbol{X}+\hat{\gamma}^\prime(\theta)\boldsymbol{n}+k(\theta)\boldsymbol{n}\boldsymbol{n}^T\partial_s\boldsymbol{X},
            \end{aligned}
        \end{equation} which leads to the desired equation \eqref{eqn:geo id} by substituting \eqref{eqn:Gk*tau} into \eqref{eqn:mu*n}.\qed
\end{proof}

With the help of the geometric identity \eqref{eqn:geo id}, the anisotropic surface diffusion equation \eqref{eqn:surf diffusion}--\eqref{eqn:weighted curvature} can be rewritten into a conservative form: \begin{subequations}\label{eqn:conservative form}
    \begin{align}
        &\partial_t\boldsymbol{X}\cdot\boldsymbol{n}-\partial_{ss}\mu=0,\qquad 0<s<L(t),\qquad\forall t\geq 0,\label{eqn:conservative form a}\\
        &\mu\boldsymbol{n}+\partial_s\Bigl(\hat{\boldsymbol{Z}}_k(\theta)\partial_s\boldsymbol{X}\Bigr)=\boldsymbol{0},\label{eqn:conservative form b}
    \end{align}
\end{subequations} where $L(t)\coloneqq\int_{\Gamma(t)} 1\,\mathrm{d}s$ represents the length of $\Gamma(t)$.

\subsection{Weak formulation} 

Let $\mathbb{I}\coloneqq [0,1]$ be the unit interval, and the evolving curve $\Gamma(t)$ is parameterized as \begin{equation}
    \Gamma(t)\coloneqq\boldsymbol{X}(\rho,t)=(x(\rho,t),y(\rho,t))^T\colon\mathbb{I}\times\mathbb{R}^+\to\mathbb{R}^2,
\end{equation} with a time-independent variable $\rho\in\mathbb{I}$. Then the arclength parameter $s$ can be given as $s(\rho,t)=\int_0^\rho|\partial_\rho\boldsymbol{X}|\,\mathrm{d}q$ satisfying $\partial_\rho s=|\partial_\rho\boldsymbol{X}|$. (We will not discriminate $\boldsymbol{X}(\rho,t)$ and $\boldsymbol{X}(s,t)$ for representing $\Gamma(t)$ if there's no misunderstanding.)

Introducing the following functional space \begin{equation}
    L^2(\mathbb{I})\coloneqq\left\{u\colon\mathbb{I}\to\mathbb{R}\mid\int_{\Gamma(t)}|u(s)|^2\,\mathrm{d}s=\int_{\mathbb{I}}|u(s(\rho,t))|^2\partial_\rho s\,\mathrm{d}s<+\infty\right\},
\end{equation} equipped with the $L^2$-inner product \begin{equation}
    \Bigl(u,v\Bigr)_{\Gamma(t)}\coloneqq\int_{\Gamma(t)}u(s)v(s)\,\mathrm{d}s=\int_{\Gamma(t)}u(s(\rho,t))v(s(\rho,t))\partial_\rho s\,\mathrm{d}\rho,
\end{equation} for any scalar or vector valued functions. And the Sobolev spaces are defined as \begin{subequations}
    \begin{align}
        H^1(\mathbb{I})&\coloneqq\left\{u\colon\mathbb{I}\to\mathbb{R}\mid u\in L^2(\mathbb{I}),\,\,\text{and}\,\,\partial_\rho u\in L^2(\mathbb{I})\right\},\\
        H_p^1(\mathbb{I})&\coloneqq\left\{u\in H^1(\mathbb{I})\mid u(0)=u(1)\right\}.
    \end{align}
\end{subequations} 
Extensions of above definitions to the vector-valued functions in $[L^2(\mathbb{I})]^2$, $[H^1(\mathbb{I})]^2$ and $[H_p^1(\mathbb{I})]^2$ are straightforward.

Multiplying a test function $\varphi\in H_p^1(\mathbb{I})$ to \eqref{eqn:conservative form a}, then integrating over $\Gamma(t)$ and taking integration by parts, we obtain \begin{equation}\label{eqn:weak deduce 1}
    \Bigl(\boldsymbol{n}\cdot\partial_t\boldsymbol{X},\varphi\Bigr)_{\Gamma(t)}+\Bigl(\partial_s\mu,\partial_s\varphi\Bigr)_{\Gamma(t)}=0.
\end{equation} Similarly, by taking an inner product with a test function $\boldsymbol{\omega}=(\omega_1,\omega_2)^T\in[H^1_p(\mathbb{I})]^2$ to \eqref{eqn:conservative form b} and integrating by parts, we deduce \begin{equation}\label{eqn:weak deduce 2}
    \begin{aligned}
        0&=\Bigl(\mu\boldsymbol{n}+\partial_s\left(\hat{\boldsymbol{Z}}_k(\theta)\partial_s\boldsymbol{X}\right),\boldsymbol{\omega}\Bigr)_{\Gamma(t)}\nonumber\\
        &=\Bigl(\mu\boldsymbol{n},\boldsymbol{\omega}\Bigr)_{\Gamma(t)}-\Bigl(\hat{\boldsymbol{Z}}_k(\theta)\partial_s\boldsymbol{X},\partial_s\boldsymbol{\omega}\Bigr)_{\Gamma(t)}.
    \end{aligned}
\end{equation}

Combining \eqref{eqn:weak deduce 1} and \eqref{eqn:weak deduce 2}, a weak formulation for anisotropic surface diffusion \eqref{eqn:surf diffusion}--\eqref{eqn:weighted curvature} with an initial condition $\boldsymbol{X}(s,0)=\boldsymbol{X}_0(s)=(x_0(s),y_0(s))^T$ reads: given an initial closed curve $\Gamma(0)=\boldsymbol{X}(\cdot,0)=\boldsymbol{X}_0\in [H^1_p(\mathbb{I})]^2$, find the solution $\left(\boldsymbol{X}(\cdot,t),\mu(\cdot,t)\right)\in [H^1_p(\mathbb{I})]^2\times H^1_p(\mathbb{I})$ satisfying \begin{subequations}\label{eqn:weak formulation}
    \begin{align}
        &\Bigl(\boldsymbol{n}\cdot\partial_t\boldsymbol{X},\varphi\Bigr)_{\Gamma(t)}+\Bigl(\partial_s\mu,\partial_s\varphi\Bigr)_{\Gamma(t)}=0,\qquad\forall\varphi\in H^1_p(\mathbb{I}),\label{eqn:weak formulation a}\\
        &\Bigl(\mu\boldsymbol{n},\boldsymbol{\omega}\Bigr)_{\Gamma(t)}-\Bigl(\hat{\boldsymbol{Z}}_k(\theta)\partial_s\boldsymbol{X},\partial_s\boldsymbol{\omega}\Bigr)_{\Gamma(t)}=0,\qquad\forall\boldsymbol{\omega}\in [H^1_p(\mathbb{I})]^2.\label{eqn:weak formulation b}
    \end{align}
\end{subequations}

\begin{remark}
    The surface energy matrix $\hat{\boldsymbol{Z}}_k(\theta)$ can be transformed into $\boldsymbol{Z}_k(\boldsymbol{n})$ in \cite{bao2023symmetrized2D}, by the one-to-one correspondence $\boldsymbol{n}\coloneqq \boldsymbol{n}(\theta)=(-\sin\theta,\cos\theta)^T$.
\end{remark}

\subsection{Area conservation and energy dissipation}

Suppose the solution of the weak formulation \eqref{eqn:weak formulation} be $(\boldsymbol{X}(\cdot,t),\mu(\cdot,t))$, and the evolving curve $\Gamma(t)$ is given by $\Gamma(t) = \boldsymbol{X}(\cdot,t) = (x(\cdot,t),y(\cdot,t))^T$. Let $A_c(t)$ be the area of the region enclosed by $\Gamma(t)$ and $W_c(t)$ be the total free energy, respectively, which are formally defined as \begin{equation}
    A_c(t)\coloneqq\int_{\Gamma(t)}y(s,t)\partial_sx(s,t)\,\mathrm{d}s,\qquad W_c(t)=\int_{\Gamma(t)}\hat{\gamma}(\theta)\,\mathrm{d}s.
\end{equation} 

The solution of \eqref{eqn:weak formulation} satisfies the following structure-preserving properties:

\begin{proposition}[area conservation and energy dissipation]\label{prop:area conser. energy dissip.}
    The area $A_c(t)$ of the solution $\left(\boldsymbol{X}(\cdot,t),\mu(\cdot,t)\right)\in[H_p^1(\mathbb{I})]^2\times H_p^1(\mathbb{I})$ given by \eqref{eqn:weak formulation} is conserved, and the total free energy $W_c(t)$ is dissipative, i.e. \begin{equation}
        A_c(t)\equiv A_c(0),\qquad W_c(t)\leq W_c(t_1)\leq W_c(0),\qquad\forall t\geq t_1\geq 0.
    \end{equation}
\end{proposition}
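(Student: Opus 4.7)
The plan is to test each of the two weak equations in \eqref{eqn:weak formulation} with a cleverly chosen function, and to exploit the same orthogonality $\boldsymbol{n}^T\partial_s\boldsymbol{X}=0$ that already made \eqref{eqn:Gk*tau} independent of $k(\theta)$ at the continuous level. For area conservation, I would first differentiate $A_c(t)=\int_0^1 y(\rho,t)\,\partial_\rho x(\rho,t)\,d\rho$ under the integral sign; an integration by parts in $\rho$, justified by periodicity, rewrites this as $\int_0^1(\partial_t y\,\partial_\rho x-\partial_t x\,\partial_\rho y)\,d\rho$, which equals $(\partial_t\boldsymbol{X}\cdot\boldsymbol{n},1)_{\Gamma(t)}$ because $|\partial_\rho\boldsymbol{X}|\,\boldsymbol{n}=(-\partial_\rho y,\partial_\rho x)^T$. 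Inserting the admissible constant test function $\varphi\equiv 1\in H_p^1(\mathbb{I})$ into \eqref{eqn:weak formulation a}, and noting $\partial_s\varphi\equiv 0$, forces $(\partial_t\boldsymbol{X}\cdot\boldsymbol{n},1)_{\Gamma(t)}=0$, whence $A_c(t)\equiv A_c(0)$.

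For energy dissipation, the strategy is the classical gradient-flow argument. Taking $\varphi=\mu\in H_p^1(\mathbb{I})$ in \eqref{eqn:weak formulation a} yields
\begin{equation*}
\bigl(\partial_t\boldsymbol{X}\cdot\boldsymbol{n},\mu\bigr)_{\Gamma(t)}=-\bigl(\partial_s\mu,\partial_s\mu\bigr)_{\Gamma(t)},
\end{equation*}
while taking $\boldsymbol{\omega}=\partial_t\boldsymbol{X}\in[H_p^1(\mathbb{I})]^2$ in \eqref{eqn:weak formulation b} yields
\begin{equation*}
\bigl(\mu\boldsymbol{n},\partial_t\boldsymbol{X}\bigr)_{\Gamma(t)}=\bigl(\hat{\boldsymbol{Z}}_k(\theta)\partial_s\boldsymbol{X},\partial_s\partial_t\boldsymbol{X}\bigr)_{\Gamma(t)}.
\end{equation*}
The remaining task is to identify the right-hand side of the second relation with $\tfrac{d}{dt}W_c(t)$. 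By \eqref{eqn:Gk*tau} and $\boldsymbol{n}^T\partial_s\boldsymbol{X}=0$, the stabilizing contribution cancels and $\hat{\boldsymbol{Z}}_k(\theta)\partial_s\boldsymbol{X}=\hat{\gamma}(\theta)\boldsymbol{\tau}+\hat{\gamma}'(\theta)\boldsymbol{n}$. A direct computation in the $\rho$-parametrization then furnishes the pointwise identity
\begin{equation*}
\partial_t\bigl(\hat{\gamma}(\theta)\,|\partial_\rho\boldsymbol{X}|\bigr)=\bigl(\hat{\gamma}(\theta)\boldsymbol{\tau}+\hat{\gamma}'(\theta)\boldsymbol{n}\bigr)\cdot\partial_\rho\partial_t\boldsymbol{X},
\end{equation*}
whose integral over $\mathbb{I}$ is exactly $\tfrac{d}{dt}W_c(t)$ after converting back to the arclength measure. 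Chaining the three identities produces $\tfrac{d}{dt}W_c(t)=-(\partial_s\mu,\partial_s\mu)_{\Gamma(t)}\leq 0$, and integration over $[t_1,t]$ delivers the monotonicity statement.

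The main obstacle I anticipate is verifying the pointwise energy identity above, which requires computing $\partial_t\theta$ and $\partial_t|\partial_\rho\boldsymbol{X}|$ separately in the fixed-$\rho$ parametrization and combining them to obtain the kinematic relation $\partial_t\theta\cdot|\partial_\rho\boldsymbol{X}|=\boldsymbol{n}\cdot\partial_\rho\partial_t\boldsymbol{X}$. This is the geometrically nontrivial ingredient that, in conjunction with $\boldsymbol{\tau}\cdot\partial_\rho\partial_t\boldsymbol{X}=\partial_t|\partial_\rho\boldsymbol{X}|$, precisely accounts for the $\hat{\gamma}$ and $\hat{\gamma}'$ contributions and makes the stabilizing matrix $k(\theta)\boldsymbol{n}\boldsymbol{n}^T$ inactive at the continuous level; once this relation is in place, the remainder of the proof is a careful bookkeeping between the arclength and $\rho$ measures.
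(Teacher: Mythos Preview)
Your proposal is correct and follows the standard route that the paper has in mind: it omits the proof and points to \cite[Proposition~3.1]{zhang2024stabilized}, whose argument is precisely the one you outline---test \eqref{eqn:weak formulation a} with $\varphi\equiv 1$ for area conservation, and test with $\varphi=\mu$, $\boldsymbol{\omega}=\partial_t\boldsymbol{X}$ together with the pointwise identity $\partial_t\bigl(\hat{\gamma}(\theta)\,|\partial_\rho\boldsymbol{X}|\bigr)=(\hat{\gamma}(\theta)\boldsymbol{\tau}+\hat{\gamma}'(\theta)\boldsymbol{n})\cdot\partial_\rho\partial_t\boldsymbol{X}$ for energy dissipation. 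The kinematic relations $\boldsymbol{\tau}\cdot\partial_\rho\partial_t\boldsymbol{X}=\partial_t|\partial_\rho\boldsymbol{X}|$ and $\boldsymbol{n}\cdot\partial_\rho\partial_t\boldsymbol{X}=|\partial_\rho\boldsymbol{X}|\,\partial_t\theta$ you flag as the key step are exactly right and follow immediately from differentiating $\partial_\rho\boldsymbol{X}=|\partial_\rho\boldsymbol{X}|(\cos\theta,\sin\theta)^T$ in $t$.
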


The proof for Proposition~\ref{prop:area conser. energy dissip.} is similar to \cite[Proposition 3.1]{zhang2024stabilized}. Details are omitted.

\section{A structure-preserving PFEM discretization}\label{sec:SP-PFEM}

Consider a positive integer $N>2$ and let $h=1/N$ be the mesh size. We partition the unit interval as $\mathbb{I}=[0,1]\coloneqq\cup_{j=1}^NI_j$ with sub-intervals $I_j\coloneqq [\rho_{j-1},\rho_j]$ and grid points $\rho_j=jh$ for $j=1,2,\dots,N$.

Let us introduce the finite element subspaces of $H^1(\mathbb{I})$ as: \begin{subequations}
    \begin{align}
        \mathbb{K}^h&\coloneqq\left\{u^h\in C(\mathbb{I})\mid u^h|_{I_j}\in\mathcal{P}^1(I_j),\forall 1\leq j\leq N\right\}\subseteq H^1(\mathbb{I}),\\
        \mathbb{K}^h_p&\coloneqq\left\{u^h\in\mathbb{K}^h\mid u^h(0)=u^h(1)\right\},
    \end{align}
\end{subequations} where $\mathcal{P}^1(I_j)$ stands for the space of polynomials defined on $I_j$ with degree $\leq 1$. 

Let $\tau$ be the uniform time step, and the approximation of $\Gamma(t)=\boldsymbol{X}(\cdot,t)$ at the $m^{\text{th}}$ discrete time level $t_m=m\tau$ be $\Gamma^m=\boldsymbol{X}^m(\cdot)=(x^m(\cdot),y^m(\cdot))^T\in[\mathbb{K}^h_p]^2,\,\,m=0,1,2,\dots$. Suppose the polygonal curve $\Gamma^m$ is composed by ordered line segments $\{\boldsymbol{h}_j^m\}_{j=1}^N$, i.e. \begin{equation}
    \Gamma^m=\bigcup_{j=1}^N\boldsymbol{h}_j^m,\qquad\text{with}\quad \boldsymbol{h}_j^m=(h^m_{j,x},h^m_{j,y})^T\coloneqq\boldsymbol{X}^m(\rho_j)-\boldsymbol{X}^m(\rho_{j-1})
\end{equation} for $j=1,2,\dots,N$. The unit tangential vector $\boldsymbol{\tau}^m$, the outward unit normal vector $\boldsymbol{n}^m$ and the inclination angle $\theta^m$ are constant on each interval $I_j$, which can be computed as \begin{equation}
        \boldsymbol{\tau}^m|_{I_j}=\frac{\boldsymbol{h}_j^m}{|\boldsymbol{h}_j^m|}\coloneqq\boldsymbol{\tau}_{j}^m,\qquad \boldsymbol{n}^m|_{I_j}=-(\boldsymbol{\tau}^m_j)^\perp=-\frac{(\boldsymbol{h}_j^m)^\perp}{|\boldsymbol{h}_j^m|}\coloneqq\boldsymbol{n}^m_j,
\end{equation} and \begin{equation}
    \theta^m|_{I_j}\coloneqq\theta^m_j,\qquad\text{satisfying}\quad \cos\theta^m_j=\frac{h_{j,x}^m}{|\boldsymbol{h}_j^m|},\quad\sin\theta^m_j=\frac{h_{j,y}^m}{|\boldsymbol{h}_j^m|}.
\end{equation} The mass-lumped inner product $(\cdot,\cdot)^h_{\Gamma^m}$ and discretized differential operator $\partial_s$ on $\Gamma^m$ for scalar-/vector-valued functions are defined as \begin{subequations}
    \begin{align}
        &\Bigl(f,g\Bigr)^h_{\Gamma^m}\coloneqq\sum_{j=1}^N\frac{|\boldsymbol{h}_j^m|}{2}(f(\rho_j)g(\rho_j)+f(\rho_{j-1})g(\rho_{j-1})),\label{eqn:mass-lumped inner product}\\
        &\partial_s f|_{I_j}\coloneqq\frac{f(\rho_j)-f(\rho_{j-1})}{|\boldsymbol{h}_j^m|},\qquad\forall 0\leq j\leq N.\label{eqn:discretized differential operator}
    \end{align}
\end{subequations}

Following ideas in \cite{bao2023structure,jiang2021perimeter} to design a volume-preserving scheme for the surface diffusion, by adopting the explicit-implicit Euler method for time discretization, a structure-preserving PFEM for the anisotropic surface diffusion \eqref{eqn:surf diffusion} is given as: for a given initial curve $\Gamma^0=\boldsymbol{X}^0(\cdot)\in[\mathbb{K}^h_p]^2$, find the solution $\left(\boldsymbol{X}^{m+1}(\cdot),\mu^{m+1}(\cdot)\right)\in[\mathbb{K}^h_p]^2\times\mathbb{K}^h_p,\,\,m=0,1,2,\dots$ satisfying 
\begin{subequations}\label{eqn:SP-PFEM}
    \begin{align}
            &\Bigl(\boldsymbol{n}^{m+\frac{1}{2}}\cdot \frac{\boldsymbol{X}^{m+1}-\boldsymbol{X}^m}{\tau}, \varphi^h\Bigr)_{\Gamma^m}^h+\Bigl(\partial_s\mu^{m+1}, \partial_s\varphi^h\Bigr)_{\Gamma^m}^h=0,\qquad \forall\varphi^h\in \mathbb{K}_p^h,\\
            &\Bigl(\mu^{m+1}\boldsymbol{n}^{m+\frac{1}{2}},\boldsymbol{\omega}^h\Bigr)_{\Gamma^m}^h-\Bigl(\hat{\boldsymbol{Z}}_k(\theta^m)\partial_s\boldsymbol{X}^{m+1},\partial_s\boldsymbol{\omega}^h\Bigr)_{\Gamma^m}^h=0,\qquad\forall\boldsymbol{\omega}^h\in\mathbb[\mathbb{K}^h_p]^2,
    \end{align}
\end{subequations} where \begin{equation}
    \boldsymbol{n}^{m+\frac{1}{2}}=-\frac{1}{2}\left(\partial_s\boldsymbol{X}^m+\partial_s\boldsymbol{X}^{m+1}\right)^\perp=-\frac{1}{2|\partial_\rho\boldsymbol{X}^m|}\left(\partial_\rho\boldsymbol{X}^m+\partial_\rho\boldsymbol{X}^{m+1}\right)^\perp.
\end{equation}

\begin{remark}
    The fully implicit scheme is solved numerically by the Newton's method.  The choice of $\boldsymbol{n}^{m+\frac{1}{2}}$ 
    is essential for the area conservation in the discrete level. 
\end{remark}

\subsection{Structure-preserving property of the SP-PFEM}\label{subsec:SP-PFEM properties}

Denote $A_c^m$ the area of the region enclosed by polygonal curve $\Gamma^m$ and $W_c^m$ the total free energy, respectively, which are formally defined as 
\begin{subequations}
    \begin{align}
        &A_c^m\coloneqq\frac{1}{2}\sum_{j=1}^N(x_j^m-x_{j-1}^m)(y_j^m+y_{j-1}^m),\\
        &W_c^m\coloneqq\sum_{j=1}^N\hat{\gamma}(\theta_j^m)|\boldsymbol{h}_j^m|,
    \end{align}
\end{subequations}
where $x_j^m\coloneqq x^m(\rho_j),\,y_j^m\coloneqq y^m(\rho_j), \,  j=1,2,\dots, N$.

\begin{theorem}[area conservation and unconditional energy stability]\label{thm:structure-preserving}
    Suppose $\hat{\gamma}(\theta)\in C^2(2\pi\mathbb{T})$ and satisfies the optimal energy stability condition \eqref{eqn:energy stab cond b}. The SP-PFEM \eqref{eqn:SP-PFEM} is area conservative and unconditional energy dissipative with sufficiently large $k(\theta)$, i.e. \begin{equation}\label{eqn:structure-preserving}
        A_c^{m+1}=A_c^m=\cdots=A_c^0,\qquad W_c^{m+1}\leq W_c^m\leq \cdots\leq W_c^0 \qquad\forall m\geq 0.
    \end{equation}
\end{theorem}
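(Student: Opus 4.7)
The plan is to handle the two assertions of Theorem~\ref{thm:structure-preserving} separately, using the two standard testing choices that have become canonical in the SP-PFEM literature, and then to reduce the energy statement to a purely local edge-wise inequality that will be supplied by the machinery of Sections~\ref{sec:loc energy est}--\ref{sec:existence of k0}.

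For area conservation, I would test \eqref{eqn:SP-PFEM}a with the constant function $\varphi^h\equiv 1\in\mathbb{K}_p^h$. Then $\partial_s\varphi^h\equiv 0$ and the scheme collapses to
\begin{equation*}
    \Bigl(\boldsymbol{n}^{m+\frac{1}{2}}\cdot(\boldsymbol{X}^{m+1}-\boldsymbol{X}^m),1\Bigr)_{\Gamma^m}^h=0.
\end{equation*}
The crux is to recognise the left-hand side as exactly $2(A_c^{m+1}-A_c^m)$. With the choice $\boldsymbol{n}^{m+1/2}=-\tfrac{1}{2}(\partial_s\boldsymbol{X}^m+\partial_s\boldsymbol{X}^{m+1})^\perp$, the mass-lumped sum expands to $\sum_j \tfrac{1}{2}(\boldsymbol{h}_j^{m+1}-\boldsymbol{h}_j^m)\cdot(-\boldsymbol{h}_j^m-\boldsymbol{h}_j^{m+1})^\perp/1$, and a direct shoelace-type calculation shows this equals $2(A_c^{m+1}-A_c^m)$; this identity is precisely the one that motivated the half-step normal (see the analogous computations in the volume-preserving schemes of \cite{bao2021structure,bao2023structure,jiang2021perimeter}), so I would cite it rather than re-derive it.

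For energy dissipation, the strategy is the usual pairing: test \eqref{eqn:SP-PFEM}a with $\varphi^h=\tau\mu^{m+1}\in\mathbb{K}_p^h$ and \eqref{eqn:SP-PFEM}b with $\boldsymbol{\omega}^h=\boldsymbol{X}^{m+1}-\boldsymbol{X}^m\in[\mathbb{K}_p^h]^2$, then add. The normal-velocity terms cancel, the dissipation $\tau\Bigl(\partial_s\mu^{m+1},\partial_s\mu^{m+1}\Bigr)_{\Gamma^m}^h\geq 0$ emerges, and one is left with
\begin{equation*}
    \Bigl(\hat{\boldsymbol{Z}}_k(\theta^m)\partial_s\boldsymbol{X}^{m+1},\partial_s(\boldsymbol{X}^{m+1}-\boldsymbol{X}^m)\Bigr)_{\Gamma^m}^h\leq 0\quad\Longleftrightarrow\quad W_c^{m+1}\leq W_c^m
\end{equation*}
to establish. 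Using the edge-wise formulas $\partial_s\boldsymbol{X}^{m+1}|_{I_j}=\boldsymbol{h}_j^{m+1}/|\boldsymbol{h}_j^m|$ and the definition of the mass-lumped inner product, the left-hand side above becomes $\sum_j |\boldsymbol{h}_j^m|^{-1}\,\hat{\boldsymbol{Z}}_k(\theta_j^m)\boldsymbol{h}_j^{m+1}\cdot(\boldsymbol{h}_j^{m+1}-\boldsymbol{h}_j^m)$, so it suffices to prove the local energy estimate
\begin{equation*}
    \frac{1}{|\boldsymbol{h}_j^m|}\,\hat{\boldsymbol{Z}}_k(\theta_j^m)\boldsymbol{h}_j^{m+1}\cdot(\boldsymbol{h}_j^{m+1}-\boldsymbol{h}_j^m)\;\geq\;\hat{\gamma}(\theta_j^{m+1})|\boldsymbol{h}_j^{m+1}|-\hat{\gamma}(\theta_j^m)|\boldsymbol{h}_j^m|
\end{equation*}
on each edge; summing telescopes to $W_c^{m+1}-W_c^m$. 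Under the hypothesis $3\hat{\gamma}(\theta)-\hat{\gamma}(\theta-\pi)\geq 0$ and a sufficiently large stabilizer $k(\theta)\geq k_0(\theta)$, this inequality is precisely the content of the local energy estimate to be proved in Section~\ref{sec:loc energy est} (existence of $k_0$ postponed to Section~\ref{sec:existence of k0}).

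Consequently, the proof of Theorem~\ref{thm:structure-preserving} itself is essentially bookkeeping: choose the right test functions, exploit the symmetry of $\hat{\boldsymbol{Z}}_k$, and invoke the edge-wise bound. The real difficulty is hidden in that local estimate, which is why the present theorem is stated conditionally on the optimal stability condition and deferred to the subsequent sections; that is where the analytical burden — sharp bounds on the symmetric matrix $\hat{\boldsymbol{Z}}_k$ and the explicit construction of $k_0(\theta)$ — actually lives.
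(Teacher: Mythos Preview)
Your proposal is correct and follows essentially the same route as the paper: area conservation is delegated to \cite{bao2021structure}, and for energy dissipation you test with $\varphi^h=\mu^{m+1}$ and $\boldsymbol{\omega}^h=\boldsymbol{X}^{m+1}-\boldsymbol{X}^m$, reduce the bilinear term to an edge-wise sum, and invoke the local energy estimate of Theorem~\ref{thm:loc energy est} (whose validity under \eqref{eqn:energy stab cond b} with $k\geq k_0$ is supplied by Theorem~\ref{thm:minimal stab func well-defined}). The only cosmetic slip is the displayed ``$\Longleftrightarrow$'': what you actually need and use is the one-sided implication that the local estimate bounds the bilinear form below by $W_c^{m+1}-W_c^m$, which together with nonpositivity yields $W_c^{m+1}\leq W_c^m$.
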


For the proof of area conservation, we refer the reader to \cite[Theorem 2.1]{bao2021structure} by Bao and Zhao for surface diffusion. Detailed proof of energy dissipation will appear in the next section.

\begin{remark}
    With the adoption of $\gamma(\boldsymbol{n})$, SP-PFEM \eqref{eqn:SP-PFEM} can be transformed to the symmetrized SP-PFEM in \cite{bao2023symmetrized2D}. 
    It is a significant improvement compared to the original energy stability condition $\gamma(\boldsymbol{n})=\gamma(-\boldsymbol{n})$ or $\hat{\gamma}(\theta) = \hat{\gamma}(\theta-\pi)$ in \cite{bao2023symmetrized2D}. Our analysis within the $\hat{\gamma}(\theta)$ formulation indicates that, the symmetry condition $\hat{\gamma}(\theta) = \hat{\gamma}(\theta-\pi)$ of the symmetrized SP-PFEM could be improved to optimal condition \eqref{eqn:energy stab cond b}, without any extra condition.
\end{remark}

\section{Local energy estimate and the unconditional energy stability}\label{sec:loc energy est}

\subsection{The minimal stabilizing function}

 Introduce the following auxiliary functions, \begin{subequations}\label{eqn:auxiliary func}
    \begin{align}
        P_\alpha(\phi,\theta)&\coloneqq\hat{\gamma}(\theta)-\hat{\gamma}^\prime(\theta)\sin2\phi+\alpha\sin^2\phi,\label{eqn:auxiliary func P}\\
        Q(\phi,\theta)&\coloneqq\hat{\gamma}(\theta-\phi)+\hat{\gamma}(\theta)\cos\phi-\hat{\gamma}^\prime(\theta)\sin\phi. \label{eqn:auxiliary func Q}
    \end{align}
\end{subequations} 

Thus, the minimal stabilizing function is defined by adopting the auxiliary functions $P_\alpha,Q$: \begin{equation}\label{eqn:minimal stab func}
    k_0(\theta)\coloneqq\inf\left\{\alpha\geq 0\mid 4\hat{\gamma}(\theta)P_\alpha(\phi,\theta)\geq Q^2(\phi,\theta),\,\,\forall\phi\in 2\pi\mathbb{T}\right\}.
\end{equation}

The following theorem ensures the existence of $k_0(\theta)$ and provides an upper bound that offers practical guidance for applications: \begin{theorem}\label{thm:minimal stab func well-defined}
    For $\hat{\gamma}(\theta)$ satisfying \eqref{eqn:energy stab cond b}, the minimal stabilizing function $k_0(\theta)$, as given in \eqref{eqn:minimal stab func}, is well-defined. Furthermore, we have the following estimate: \begin{equation}\label{eqn:upper bound}
    k_0(\theta)\leq \frac{1}{4\hat{\gamma}(\theta)}\left[A^2(\theta)+4\hat{\gamma}(\theta)A(\theta)+4|\hat{\gamma}^{\prime}(\theta)|^2\right]<\infty,
\end{equation} where \begin{equation}\label{eqn:A theta}
    A(\theta)\coloneqq\frac{\pi^2}{8}\left(5\sup_{2\pi\mathbb{T}}|\hat{\gamma}^{\prime\prime}|+5|\hat{\gamma}^\prime(\theta)|+\hat{\gamma}(\theta)\right).
\end{equation}
\end{theorem}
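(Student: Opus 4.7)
My plan is to prove both assertions by exhibiting an explicit $\alpha$ for which the defining inequality $4\hat{\gamma}(\theta)P_\alpha(\phi,\theta)\geq Q^2(\phi,\theta)$ holds for all $\phi\in 2\pi\mathbb{T}$, namely the candidate $\alpha_0 := \frac{A^2+4\hat{\gamma}A+4|\hat{\gamma}'|^2}{4\hat{\gamma}}$. Such a verification simultaneously shows that the infimum set in \eqref{eqn:minimal stab func} is nonempty, hence $k_0(\theta)$ is well-defined, and yields the quantitative bound \eqref{eqn:upper bound}. First I would introduce the auxiliary quantities $S(\phi,\theta) := 2\hat{\gamma}(\theta)\cos\phi - 2\hat{\gamma}'(\theta)\sin\phi$ and $T(\phi,\theta) := Q(\phi,\theta) - S(\phi,\theta) = \hat{\gamma}(\theta-\phi) - \hat{\gamma}(\theta)\cos\phi + \hat{\gamma}'(\theta)\sin\phi$. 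A direct expansion of $S^2$ combined with the defining property $4\hat{\gamma}\alpha_0 - 4|\hat{\gamma}'|^2 = A^2 + 4\hat{\gamma}A$ yields the key identity $4\hat{\gamma}P_{\alpha_0} - Q^2 = (2\hat{\gamma}+A)^2\sin^2\phi + S^2 - Q^2$, and since $Q^2 - S^2 = T(2S+T)$, the problem reduces to establishing $T(\phi,\theta)\bigl(2S(\phi,\theta) + T(\phi,\theta)\bigr) \leq (2\hat{\gamma}(\theta) + A(\theta))^2 \sin^2\phi$ for every $\phi\in 2\pi\mathbb{T}$.

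Before tackling general $\phi$, I would verify the two critical endpoints. At $\phi=0$, both $T(0)$ and $\sin 0$ vanish, so the inequality holds as an equality. At $\phi=\pi$, the right-hand side vanishes while the left-hand side evaluates to $(\hat{\gamma}(\theta-\pi)+\hat{\gamma}(\theta))(\hat{\gamma}(\theta-\pi)-3\hat{\gamma}(\theta))$, which is non-positive precisely by the stability condition \eqref{eqn:energy stab cond b} together with the positivity of $\hat{\gamma}$. Thus \eqref{eqn:energy stab cond b} is exactly what is required at the sharpest endpoint, and this observation motivates the particular form of the bound.

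For general $\phi$, I would invoke Taylor's theorem with integral remainder: since $T(0)=T'(0)=0$, one has $T(\phi) = \int_0^\phi (\phi-u)T''(u)du$ with $T''(u) = \hat{\gamma}''(\theta-u) + \hat{\gamma}(\theta)\cos u - \hat{\gamma}'(\theta)\sin u$. The cosine and sine contributions integrate explicitly to $2\hat{\gamma}(\theta)\sin^2(\phi/2)$ and $-\hat{\gamma}'(\theta)(\phi-\sin\phi)$, while the $\hat{\gamma}''$ contribution is bounded by $\tfrac{1}{2}\phi^2\sup|\hat{\gamma}''|$. Using the sharp pointwise comparisons $\phi^2 \leq \pi^2\sin^2(\phi/2)$ and $|\phi-\sin\phi| \leq \pi\sin^2(\phi/2)$ valid on $[-\pi,\pi]$, together with the identity $\sin^2\phi = 4\sin^2(\phi/2)\cos^2(\phi/2)$, one obtains controlled pointwise bounds on $T$ in which the $\pi^2/8$ prefactor appearing in $A(\theta)$ naturally emerges.

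The main obstacle will be closing the gap between this Taylor bound, which naturally scales like $\sin^2(\phi/2)$, and the target $(2\hat{\gamma}+A)^2\sin^2\phi$, whose factor $\cos^2(\phi/2)$ degenerates near $\phi=\pi$. The resolution is a two-regime argument: near $\phi=\pi$, the endpoint sign analysis together with continuity of $T$ and $2S+T$ forces $T(2S+T)\leq 0$, so the inequality is automatic; away from $\phi=\pi$, the factor $\cos^2(\phi/2)$ is bounded below and the Taylor estimate matches the right-hand side. The precise coefficients $5$, $5$, and $1$ in $A(\theta) = \frac{\pi^2}{8}\bigl(5\sup|\hat{\gamma}''|+5|\hat{\gamma}'(\theta)|+\hat{\gamma}(\theta)\bigr)$ will emerge from a careful accounting of the three Taylor contributions together with the cross-term $2ST$, combined with a Cauchy-type splitting $|2ST|\leq\varepsilon S^2+\varepsilon^{-1}T^2$ with optimized $\varepsilon$. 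This sharp estimate is the central technical ingredient, and it is the object developed in Section~\ref{sec:existence of k0}.
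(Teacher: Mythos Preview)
Your algebraic reduction is correct and in fact equivalent to the paper's setup: the identity $4\hat{\gamma}P_{\alpha_0}-Q^2=(2\hat{\gamma}+A)^2\sin^2\phi+S^2-Q^2$ holds, and one can check that $S^2+(2\hat{\gamma}+A)^2\sin^2\phi=(P_A+\hat{\gamma})^2+(2\hat{\gamma}'\sin\phi+A\cos\phi)^2\sin^2\phi$, so your target $Q^2\leq S^2+(2\hat{\gamma}+A)^2\sin^2\phi$ is a slightly weaker consequence of the paper's Lemma~\ref{lma:est of Q}, namely $|Q|\leq P_A+\hat{\gamma}$. Your endpoint checks at $\phi=0,\pi$ and the Taylor representation of $T$ are also fine.

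The genuine gap is in your two-regime argument near $\phi=\pi$. You assert that ``continuity of $T$ and $2S+T$ forces $T(2S+T)\leq 0$'' in a neighborhood of $\pi$, but this is false precisely in the critical case $3\hat{\gamma}(\theta)=\hat{\gamma}(\theta-\pi)$: there $2S(\pi)+T(\pi)=0$ while $T(\pi)=\hat{\gamma}(\theta-\pi)+\hat{\gamma}(\theta)>0$, so $T(2S+T)$ can be strictly positive arbitrarily close to $\pi$. Since $\sin^2\phi$ also vanishes to second order at $\pi$, you must control the \emph{coefficient} of $(\phi-\pi)^2$ in $T(2S+T)$, not just its sign. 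That coefficient involves the first-order term $(2S+T)'(\pi)=3\hat{\gamma}'(\theta)-\hat{\gamma}'(\theta-\pi)$, which is not a priori small. The paper resolves this with Lemma~\ref{lma:tight estimate lemma}: applied to the non-negative $C^2$ function $f(\theta)=3\hat{\gamma}(\theta)-\hat{\gamma}(\theta-\pi)$, it yields $|f'(\theta)(\phi-\pi)|\leq f(\theta)+2\sup|\hat{\gamma}''|\,(\phi-\pi)^2$, so the dangerous linear term is absorbed by the (possibly vanishing) value $f(\theta)$ plus a uniform quadratic remainder. This is exactly the idea that makes the bound uniform at the boundary of condition~\eqref{eqn:energy stab cond b}, and it is what produces the constant $5\sup|\hat{\gamma}''|$ in $A(\theta)$. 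Your Cauchy splitting $|2ST|\leq\varepsilon S^2+\varepsilon^{-1}T^2$ cannot substitute for it, because optimizing $\varepsilon$ over a neighborhood whose size shrinks to zero drives the constant to infinity. Once you incorporate Lemma~\ref{lma:tight estimate lemma}, your decomposition and the paper's become essentially the same proof.
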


Detailed proof of Theorem~\ref{thm:minimal stab func well-defined} will be given in Section~\ref{sec:existence of k0}.

\begin{remark}
    In previous studies, the existence of $k_0(\theta)$ was established through the open cover theorem \cite{bao2024structure,bao2024unified,zhang2024stabilized}, hence a global estimate was lacking. This results that, in the numerical computation, one usually has to solve an optimization problem to obtain $k_0(\theta)$ first. Theorem~\ref{thm:minimal stab func well-defined} provides a global estimate for $k_0(\theta)$ and eliminates the need to first compute an approximate value of $k_0(\theta)$ in the practical applications.
\end{remark}

\subsection{Local energy estimate}

\begin{theorem}[local energy estimate]\label{thm:loc energy est} For any $\boldsymbol{p},\boldsymbol{q}\in\mathbb{R}^2\backslash\{\boldsymbol{0}\}$, let $\boldsymbol{p}=|\boldsymbol{p}|(\cos\varphi,\sin\varphi)^T,\boldsymbol{q}=|\boldsymbol{q}|(\cos\theta,\sin\theta)^T$, then for sufficiently large $k(\theta)$, 
    \begin{equation}\label{eqn:loc energy est}
        \frac{1}{|\boldsymbol{q}|}\Bigl(\hat{\boldsymbol{Z}}_k(\theta)\boldsymbol{p}\Bigr)\cdot(\boldsymbol{p}-\boldsymbol{q})\geq \hat{\gamma}(\varphi)|\boldsymbol{p}|-\hat{\gamma}(\theta)|\boldsymbol{q}|.
    \end{equation}
\end{theorem}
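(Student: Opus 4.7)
The plan is to reduce the inequality to the nonnegativity of a quadratic form in $(|\boldsymbol{p}|,|\boldsymbol{q}|)$ whose coefficients are precisely the auxiliary functions $P_{k(\theta)}$ and $Q$ from \eqref{eqn:auxiliary func}, and then invoke the defining inequality of the minimal stabilizing function $k_0(\theta)$ in \eqref{eqn:minimal stab func}. The key structural input is the decomposition already implicit in the proof of \eqref{eqn:geo id}, namely
\begin{equation*}
\hat{\boldsymbol{Z}}_k(\theta)=\hat{\gamma}(\theta)I_2+\hat{\gamma}^\prime(\theta)L_\theta+k(\theta)\boldsymbol{n}(\theta)\boldsymbol{n}(\theta)^T.
\end{equation*}

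First I would introduce $\phi\coloneqq\theta-\varphi$ and compute the action of each summand of $\hat{\boldsymbol{Z}}_k(\theta)$ on $\boldsymbol{p}=|\boldsymbol{p}|\boldsymbol{\tau}(\varphi)$. A direct trigonometric computation gives $L_\theta\boldsymbol{\tau}(\varphi)=\boldsymbol{n}(2\theta-\varphi)$, and taking dot products with $\boldsymbol{p}=|\boldsymbol{p}|\boldsymbol{\tau}(\varphi)$ and with $\boldsymbol{q}=|\boldsymbol{q}|\boldsymbol{\tau}(\theta)$ yields the basic identities
\begin{equation*}
\bigl(\hat{\boldsymbol{Z}}_k(\theta)\boldsymbol{p}\bigr)\cdot\boldsymbol{p}=|\boldsymbol{p}|^2 P_{k(\theta)}(\phi,\theta),\qquad \bigl(\hat{\boldsymbol{Z}}_k(\theta)\boldsymbol{p}\bigr)\cdot\boldsymbol{q}=|\boldsymbol{p}||\boldsymbol{q}|\bigl[\hat{\gamma}(\theta)\cos\phi-\hat{\gamma}^\prime(\theta)\sin\phi\bigr],
\end{equation*}
where $P_\alpha$ is exactly the auxiliary function defined in \eqref{eqn:auxiliary func P}. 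This step is the one bookkeeping calculation that has to be carried out carefully; all the relevant dot products collapse to simple sines and cosines of $\phi$ and $2\phi$, so the identification with $P_\alpha$ and with (part of) $Q$ is immediate.

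Next I would multiply the target inequality \eqref{eqn:loc energy est} by $|\boldsymbol{q}|>0$, move all terms to one side, and substitute the two identities above. Using $\hat{\gamma}(\varphi)=\hat{\gamma}(\theta-\phi)$ to combine the $|\boldsymbol{p}||\boldsymbol{q}|$ terms, the inequality to be proved takes the clean form
\begin{equation*}
|\boldsymbol{p}|^2 P_{k(\theta)}(\phi,\theta)-|\boldsymbol{p}||\boldsymbol{q}|\,Q(\phi,\theta)+|\boldsymbol{q}|^2\hat{\gamma}(\theta)\geq 0,
\end{equation*}
which is precisely the quadratic form with matrix $\bigl(\begin{smallmatrix}P_{k(\theta)}& -Q/2\\ -Q/2 & \hat{\gamma}(\theta)\end{smallmatrix}\bigr)$ tested on $(|\boldsymbol{p}|,|\boldsymbol{q}|)$. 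Since $\hat{\gamma}(\theta)>0$, positive semidefiniteness is equivalent to the single inequality $4\hat{\gamma}(\theta)P_{k(\theta)}(\phi,\theta)\geq Q^2(\phi,\theta)$ for every $\phi\in 2\pi\mathbb{T}$ (the condition $P_{k(\theta)}\geq 0$ then follows automatically).

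Finally, this is exactly the defining condition of $k_0(\theta)$ in \eqref{eqn:minimal stab func}, so the inequality holds as soon as $k(\theta)\geq k_0(\theta)$, which is the precise meaning of ``sufficiently large $k(\theta)$''. The real work, therefore, is not in this lemma at all: once the algebraic reduction is in place the proof is essentially a tautology, and the substantive issue is ensuring that $k_0(\theta)$ is finite and computable, which is the content of Theorem~\ref{thm:minimal stab func well-defined} and is deferred to Section~\ref{sec:existence of k0}. The only genuine pitfall I see in the present proof is keeping the angle conventions straight when computing $L_\theta\boldsymbol{\tau}(\varphi)$ and the various sign-ambiguous $\sin(\varphi-\theta)$ terms; once $\phi=\theta-\varphi$ is fixed, every quantity lines up verbatim with \eqref{eqn:auxiliary func P}--\eqref{eqn:auxiliary func Q}.
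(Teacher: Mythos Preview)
Your proposal is correct and follows essentially the same approach as the paper: both compute $(\hat{\boldsymbol{Z}}_k(\theta)\boldsymbol{p})\cdot\boldsymbol{p}$ and $(\hat{\boldsymbol{Z}}_k(\theta)\boldsymbol{p})\cdot\boldsymbol{q}$ in terms of $P_{k(\theta)}(\phi,\theta)$ and $Q(\phi,\theta)$ with $\phi=\theta-\varphi$, and then reduce the desired inequality to the discriminant condition $4\hat{\gamma}(\theta)P_{k(\theta)}\geq Q^2$ supplied by Theorem~\ref{thm:minimal stab func well-defined}. The only cosmetic difference is that you phrase the last step as positive semidefiniteness of a $2\times 2$ matrix, whereas the paper substitutes $P_k\geq Q^2/(4\hat{\gamma}(\theta))$ first and then applies the elementary inequality $\frac{1}{4a}t^2-t\geq -a$; these are the same completing-the-square argument in different dress.
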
 

\begin{proof}
    By the definition of $\hat{\boldsymbol{Z}}_k(\theta)$ in \eqref{def:surf energy mat}, we have
    \begin{equation}
        \begin{aligned}
            \frac{1}{|\boldsymbol{q}|}\Bigl(\hat{\boldsymbol{Z}}_k(\theta)\boldsymbol{p}\Bigr)\cdot\boldsymbol{p}&=\frac{|\boldsymbol{p}|^2}{|\boldsymbol{q}|}\hat{\boldsymbol{Z}}_k(\theta)\left(\begin{array}{c}
                 \cos\varphi  \\
                 \sin\varphi 
            \end{array}\right)\cdot\left(\begin{array}{c}
                 \cos\varphi  \\
                 \sin\varphi 
            \end{array}\right)\\
            &=\frac{|\boldsymbol{p}|^2}{|\boldsymbol{q}|}\left(\hat{\gamma}(\theta)-\hat{\gamma}^\prime(\theta)\sin 2(\theta-\varphi)+k(\theta)\sin^2(\theta-\varphi)\right)\\
            &=\frac{|\boldsymbol{p}|^2}{|\boldsymbol{q}|}P_k(\theta-\varphi,\theta).
        \end{aligned}
    \end{equation} \begin{equation}
        \begin{aligned}
            \frac{1}{|\boldsymbol{q}|}\Bigl(\hat{\boldsymbol{Z}}_k(\theta)\boldsymbol{p}\Bigr)\cdot\boldsymbol{q}&=|\boldsymbol{p}|\hat{\boldsymbol{Z}}_k(\theta)\left(\begin{array}{c}
                 \cos\varphi  \\
                  \sin\varphi
            \end{array}\right)\cdot\left(\begin{array}{c}
                 \cos\theta  \\
                  \sin\theta
            \end{array}\right)\\
            &=|\boldsymbol{p}|\left(\hat{\gamma}(\theta)\cos(\theta-\varphi)-\hat{\gamma}^\prime(\theta)\sin(\theta-\varphi)\right)\\
            &=|\boldsymbol{p}|\left(Q(\theta-\varphi,\theta)-\hat{\gamma}(\varphi)\right).
        \end{aligned}
    \end{equation} By Theorem~\ref{thm:minimal stab func well-defined}, for sufficiently large $k(\theta)\geq k_0(\theta)$, $4\hat{\gamma}(\theta)P_k(\theta-\varphi,\theta)\geq Q^2(\theta-\varphi,\theta)$. Therefore, \begin{equation}
        \begin{aligned}
            \frac{1}{|\boldsymbol{q}|}\Bigl(\hat{\boldsymbol{Z}}_k(\theta)\boldsymbol{p}\Bigr)\cdot(\boldsymbol{p}-\boldsymbol{q})&\geq \frac{|\boldsymbol{p}|^2}{4\hat{\gamma}(\theta)|\boldsymbol{q}|}Q^2(\theta-\varphi,\theta)-|\boldsymbol{p}|Q(\theta-\varphi,\theta)+|\boldsymbol{p}|\hat{\gamma}(\varphi)\\
            &\geq |\boldsymbol{p}|\hat{\gamma}(\varphi)-|\boldsymbol{q}|\hat{\gamma}(\theta).
        \end{aligned}
    \end{equation} The last inequality comes from the fact $\frac{1}{4a}t^2-t\geq -a,\,\,\forall t\in\mathbb{R}$.\qed
\end{proof}

\begin{remark}\label{rem:indep of Z}
    By taking $\boldsymbol{p}=-\boldsymbol{q}$ in \eqref{eqn:loc energy est}, i.e. $\theta=\varphi+\pi,|\boldsymbol{p}|=|\boldsymbol{q}|$. Then local energy estimate \eqref{eqn:loc energy est} gives $2\hat{\gamma}(\theta)\geq \hat{\gamma}(\theta-\pi)-\hat{\gamma}(\theta)$. This is consistent with condition \eqref{eqn:energy stab cond b}. This result can be extended to any surface energy matrix $\hat{\boldsymbol{Z}}(\theta)$ satisfying $\begin{pmatrix} \cos\theta & \sin\theta \end{pmatrix}\hat{\boldsymbol{Z}}(\theta)\begin{pmatrix} \cos\theta \\ \sin\theta \end{pmatrix}=\hat{\gamma}(\theta)$. Therefore, within the framework of using local energy estimate, the energy stability condition \eqref{eqn:energy stab cond b} is considered optimal and cannot be further improved. 
\end{remark}

\subsection{Unconditional energy stability}

\begin{proof}
    Suppose $k(\theta)$ is sufficiently large, satisfying $k(\theta)\geq k_0(\theta)$. 
    
    For any $m\geq 0$, we have \begin{equation}\label{eqn:energy difference 1}
        \begin{aligned}
            &\left(\hat{\boldsymbol{Z}}_k(\theta^m)\partial_s\boldsymbol{X}^{m+1},\partial_s(\boldsymbol{X}^{m+1}-\boldsymbol{X}^m)\right)_{\Gamma^m}^h\\
            &=\sum_{j=1}^N \left[|\boldsymbol{h}_j^m|\left(\hat{\boldsymbol{Z}}_k(\theta_j^m)\frac{\boldsymbol{h}_j^{m+1}}{|\boldsymbol{h}_j^m|}\right)\cdot\frac{\boldsymbol{h}_j^{m+1}-\boldsymbol{h}_j^m}{|\boldsymbol{h}_j^m|}\right]\\
            &=\sum_{j=1}^N\left[\frac{1}{|\boldsymbol{h}^m_j|}\left(\hat{\boldsymbol{Z}}_k(\theta_j^m)\boldsymbol{h}_j^{m+1}\right)\cdot(\boldsymbol{h}_j^{m+1}-\boldsymbol{h}_j^m)\right]
        \end{aligned}
    \end{equation}

    Combining local energy estimate \eqref{eqn:loc energy est} with equation \eqref{eqn:energy difference 1} gives \begin{equation}\label{eqn:energy difference 2}
        \begin{aligned}
            &\left(\hat{\boldsymbol{Z}}_k(\theta^m)\partial_s\boldsymbol{X}^{m+1},\partial_s(\boldsymbol{X}^{m+1}-\boldsymbol{X}^m)\right)_{\Gamma^m}^h\\
            &\geq\sum_{j=1}^N\left[|\boldsymbol{h}_j^{m+1}|\hat{\gamma}(\theta_j^{m+1})-|\boldsymbol{h}_j^m|\hat{\gamma}(\theta_j^m)\right]\\
            &=\sum_{j=1}^N|\boldsymbol{h}_j^{m+1}|\hat{\gamma}(\theta_j^{m+1})-\sum_{j=1}^N|\boldsymbol{h}_j^m|\hat{\gamma}(\theta_j^m)=W_c^{m+1}-W_c^m.
        \end{aligned}
    \end{equation}
    
    By taking $\varphi^h=\mu^{m+1},\boldsymbol{\omega}^h=\boldsymbol{X}^{m+1}-\boldsymbol{X}^m$ in \eqref{eqn:SP-PFEM}, we have \begin{equation}
        \begin{aligned}
            \Bigl(\hat{\boldsymbol{Z}}_k(\theta^m)\partial_s\boldsymbol{X}^{m+1},\partial_s(\boldsymbol{X}^{m+1}-\boldsymbol{X}^m)\Bigr)^h_{\Gamma^m}&=\Bigl(\mu^{m+1}\boldsymbol{n}^{m+\frac{1}{2}},\boldsymbol{X}^{m+1}-\boldsymbol{X}^m\Bigr)^h_{\Gamma^m}\\
            &=-\tau\Bigl(\partial_s\mu^{m+1},\partial_s\mu^{m+1}\Bigr)^h_{\Gamma^m}.
        \end{aligned}
    \end{equation} Together with \eqref{eqn:energy difference 2} yields \begin{equation}
            W_c^{m+1}-W_c^m\leq -\tau\Bigl(\partial_s\mu^{m+1},\partial_s\mu^{m+1}\Bigr)^h_{\Gamma^m}\leq 0,\qquad\forall m\geq 0.
    \end{equation} This completes the proof of Theorem~\ref{thm:structure-preserving}.\qed
\end{proof}

\section{Upper bound of the minimal stabilizing function}\label{sec:existence of k0}

To establish the existence of $k_0(\theta)$ with the optimal energy stability condition \eqref{eqn:energy stab cond b}, a very sharp estimate is required for $Q(\phi,\theta)$. Comparing to other estimates in the literature \cite{bao2024structure,zhang2024stabilized}, this estimate explicitly relates to both $P_\alpha(\phi,\theta)$ and $\hat{\gamma}(\theta)$. Lemma~\ref{lma:tight estimate lemma} provides the crucial estimate, which is essential for our existence proof. We start with the following lemma to explore the properties of the optimal energy stability condition \eqref{eqn:energy stab cond b}.

\begin{remark}
    In \cite{zhang2024stabilized}, the bound of $Q(\phi,\theta)$ is controlled by only concerning with $\hat{\gamma}(\theta)$. In \cite{bao2024structure}, the authors establish the required estimates by coupling $P_\alpha(\phi,\theta)$ and $Q(\phi,\theta)$ with $\hat{\gamma}(\theta)$ separately. Therefore, the estimates they obtained are relatively less refined. Here, we explicitly combine $P_\alpha,\hat{\gamma}(\theta)$ and $Q$ to obtain a sharper estimate.
\end{remark}

\begin{lemma}\label{lma:tight estimate lemma}
    Let $f$ be a non-negative $C^2$ function on $2\pi\mathbb{T}$. Then for any positive constant $C\geq \sup\limits_{2\pi\mathbb{T}}|f^{\prime\prime}|$, we have \begin{equation} \label{eqn:sharp estimate lemma}
        |f^\prime(x) y| \leq f(x) + \frac{C}{2}y^2, \qquad \forall x,y\in 2\pi\mathbb{T}.
    \end{equation} 
\end{lemma}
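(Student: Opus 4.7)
The plan is a one-sided Taylor argument that exploits the non-negativity of $f$. Since $f\in C^2(2\pi\mathbb{T})$, for any $x$ and any real increment $y$ we may expand around $x$ (lifting to $\mathbb{R}$ if needed, using the $2\pi$-periodicity of $f$) via Taylor's theorem with Lagrange remainder: there exists $\xi$ between $x$ and $x+y$ such that
\begin{equation*}
    f(x+y) \;=\; f(x) + f'(x)y + \tfrac{1}{2}f''(\xi)\,y^2.
\end{equation*}
Because $f\geq 0$, the left-hand side is non-negative, so rearranging yields
\begin{equation*}
    -f'(x)y \;\leq\; f(x) + \tfrac{1}{2}f''(\xi)\,y^2 \;\leq\; f(x) + \tfrac{C}{2}\,y^2,
\end{equation*}
where the last inequality uses $|f''|\leq C$ on $2\pi\mathbb{T}$.

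Next I would carry out the symmetric step: Taylor-expanding instead at the point $x-y$ (equivalently, replacing $y$ by $-y$ in the previous display), there exists $\eta$ between $x$ and $x-y$ with
\begin{equation*}
    f(x-y) \;=\; f(x) - f'(x)y + \tfrac{1}{2}f''(\eta)\,y^2 \;\geq\; 0,
\end{equation*}
which rearranges to $f'(x)y \leq f(x) + \tfrac{C}{2}y^2$. Combining the two one-sided bounds gives
\begin{equation*}
    |f'(x)\,y| \;\leq\; f(x) + \tfrac{C}{2}\,y^2, \qquad \forall x\in 2\pi\mathbb{T},\ y\in\mathbb{R},
\end{equation*}
which is the desired inequality \eqref{eqn:sharp estimate lemma}.

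There is no real obstacle here; this is a Landau--Kolmogorov-type interpolation bound, and the non-negativity $f\geq 0$ is exactly what allows the remainder term to be controlled without a global bound on $f'$ itself. The only minor point worth noting in the writeup is that although $x$ lives on the circle, the Taylor expansion is carried out on a lift to $\mathbb{R}$, so the increment $y$ is naturally real; periodicity is used only to ensure that the hypothesis $|f''|\leq C$ transfers to the lift. The sharpness of the inequality (equality being achievable in the quadratic-in-$y$ envelope when $f$ vanishes at a nearby point) is what makes this lemma strong enough to drive the subsequent control of $Q(\phi,\theta)$ by $P_\alpha(\phi,\theta)$ and $\hat{\gamma}(\theta)$ in the proof of Theorem~\ref{thm:minimal stab func well-defined}.
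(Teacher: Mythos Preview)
Your argument is correct and is essentially identical to the paper's own proof: both use the second-order Taylor expansion of $f$ at $x$ with the Lagrange remainder, invoke the non-negativity $f(x\pm y)\geq 0$, and bound the remainder by $\tfrac{C}{2}y^2$ to obtain the two one-sided inequalities that combine into \eqref{eqn:sharp estimate lemma}. The only cosmetic difference is that the paper compresses the $\pm y$ step into a single displayed inequality, whereas you write out the two cases separately.
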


\begin{proof}Since $f$ is $C^2$ defined on $2\pi\mathbb{T}$, we know $f^{\prime\prime}$ is bounded. By the mean value theorem and the non-negativity of $f$, for any positive constant $C\geq \sup\limits_{2\pi\mathbb{T}}|f^{\prime\prime}|$, it holds 
     \begin{equation}\label{eqn:poly of y}
        \begin{aligned}
            0&\leq f(x+y)\leq f(x)+f^{\prime}(x)y+\frac{C}{2}y^2,\qquad\forall x, y\in 2\pi\mathbb{T}.
        \end{aligned}
    \end{equation} 
    Therefore, we know $-f'(x)y\leq f(x)+\frac{C}{2}y^2$ and $f'(x)y\leq f(x)+\frac{C}{2}y^2$, which implies \eqref{eqn:sharp estimate lemma}.\qed
\end{proof}

\begin{remark}
    Lemma~\ref{lma:tight estimate lemma} plays a crucial role in bounding $Q(\phi,\theta)$ and analyzing the critical situation when $\phi=\pi$. It's challenging to obtain similar inequalities in the $\gamma(\boldsymbol{n})$ formulation.
\end{remark}

\begin{remark}
    If attempting to obtain an inequality similar to Lemma~\ref{lma:tight estimate lemma} in the $\gamma(\boldsymbol{n})$ formulation, thorny difficulties may arise. Suppose $\gamma(\boldsymbol{n})$ is expanded at $\boldsymbol{n}_0$, when the line connecting $\boldsymbol{n},\boldsymbol{n}_0$ passes through the origin, the Hessian matrix $\boldsymbol{H}_\gamma$ becomes unbounded as it has no definition at $\boldsymbol{0}$. This prevents its gradient $\nabla\gamma(\boldsymbol{n})$ from being effectively controlled. Even if a similar estimate can be obtained, the corresponding coefficient $C$ would depend on $\boldsymbol{n}_0$, rather than being a constant as in the case of $\hat{\gamma}(\theta)$ formulation.
\end{remark}

\begin{lemma}[Estimation of $Q(\phi,\theta)$]\label{lma:est of Q}
    Suppose the optimal energy stability condition \eqref{eqn:energy stab cond b} holds. For $Q(\phi,\theta)$ defined in \eqref{eqn:auxiliary func Q}. Then \begin{equation}
        |Q(\phi,\theta)|\leq|P_{A(\theta)}(\phi,\theta)+\hat{\gamma}(\theta)|,\qquad\forall\phi\in2\pi\mathbb{T},
    \end{equation}
    where $A(\theta)$ is defined in \eqref{eqn:A theta}.
\end{lemma}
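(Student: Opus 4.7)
The plan is to prove the stronger pointwise bounds $P_{A(\theta)}(\phi,\theta)+\hat{\gamma}(\theta)\geq Q(\phi,\theta)$ and $P_{A(\theta)}(\phi,\theta)+\hat{\gamma}(\theta)\geq -Q(\phi,\theta)$ for every $\phi\in 2\pi\mathbb{T}$; summing the two yields $P_{A(\theta)}+\hat{\gamma}(\theta)\geq 0$, upon which the two bounds collapse into $|Q|\leq P_{A(\theta)}+\hat{\gamma}(\theta)=|P_{A(\theta)}+\hat{\gamma}(\theta)|$, as claimed. Write $F:=P_{A(\theta)}+\hat{\gamma}(\theta)-Q$ and $G:=P_{A(\theta)}+\hat{\gamma}(\theta)+Q$; the task reduces to $F,G\geq 0$.

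The bound $G\geq 0$ is the short one. Since $\hat{\gamma}(\theta)(2+\cos\phi)$ and $\hat{\gamma}(\theta-\phi)$ are both non-negative, only $-\hat{\gamma}'(\theta)(\sin\phi+\sin 2\phi)$ can pull $G$ down. Using $(\sin\phi+\sin 2\phi)^2=\sin^2\phi\,(1+2\cos\phi)^2\leq 9\sin^2\phi$ together with Lemma~\ref{lma:tight estimate lemma} applied at $x=\theta$ with $y=\sin\phi+\sin 2\phi$, one obtains $|\hat{\gamma}'(\theta)(\sin\phi+\sin 2\phi)|\leq\hat{\gamma}(\theta)+\tfrac{9}{2}\sup|\hat{\gamma}''|\sin^2\phi$; the $\hat{\gamma}(\theta)$ part is absorbed by $\hat{\gamma}(\theta)(1+\cos\phi)$ and the $\sin^2\phi$ part by $A(\theta)\sin^2\phi$, thanks to $A(\theta)\geq\tfrac{5\pi^2}{8}\sup|\hat{\gamma}''|\geq\tfrac{9}{2}\sup|\hat{\gamma}''|$. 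No appeal to \eqref{eqn:energy stab cond b} is needed here.

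The bound $F\geq 0$ is the crux and uses the optimal condition \eqref{eqn:energy stab cond b} in an essential way. I split $\phi\in 2\pi\mathbb{T}$ into Region~I with $|\phi|\leq\pi/2$ and Region~II with $|\phi-\pi|\leq\pi/2$. In Region~I, a second-order Taylor expansion of $\hat{\gamma}(\theta-\phi)$ around $\theta$ reduces $F$ to $\hat{\gamma}(\theta)(1-\cos\phi)+\hat{\gamma}'(\theta)[\phi+\sin\phi-\sin 2\phi]-\tfrac{\phi^2}{2}\hat{\gamma}''(\xi_1)+A(\theta)\sin^2\phi$. The cross term is dealt with by the sharp global estimate $|\phi+\sin\phi-\sin 2\phi|\leq\tfrac{7}{6}|\phi|^3$ (obtained from $|\phi-\sin\phi|\leq|\phi|^3/6$ and $|2\sin\phi(1-\cos\phi)|\leq|\phi|^3$), combined with $|\phi|^3\leq\tfrac{\pi^3}{8}\sin^2\phi$ on Region~I; the Taylor remainder uses $\phi^2/2\leq\tfrac{\pi^2}{8}\sin^2\phi$. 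Both are absorbed by $A(\theta)\sin^2\phi$ with room to spare. In Region~II, set $\psi=\phi-\pi$ and Taylor-expand $\hat{\gamma}(\theta-\pi-\psi)$ around $\theta-\pi$: the $\psi$-independent part becomes $h(\theta):=3\hat{\gamma}(\theta)-\hat{\gamma}(\theta-\pi)\geq 0$ by \eqref{eqn:energy stab cond b}. The linear-in-$\psi$ contribution is rewritten by adding and subtracting $3\psi\hat{\gamma}'(\theta)$ as $\psi\hat{\gamma}'(\theta-\pi)-\hat{\gamma}'(\theta)\sin\psi(2\cos\psi+1)=-\psi\,h'(\theta)+\hat{\gamma}'(\theta)[3\psi-\sin\psi(2\cos\psi+1)]$, and the bracket satisfies $|3\psi-\sin\psi(2\cos\psi+1)|\leq\tfrac{3}{2}|\psi|^3\leq\tfrac{3\pi^3}{16}\sin^2\psi$ by the same Taylor estimates. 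Collecting everything, $F\geq[A(\theta)-\tfrac{5\pi^2}{8}\sup|\hat{\gamma}''|-\tfrac{3\pi^3}{16}|\hat{\gamma}'(\theta)|-\tfrac{\pi^2}{8}\hat{\gamma}(\theta)]\sin^2\psi$, which is non-negative because $\tfrac{5\pi^2}{8}\geq\tfrac{3\pi^3}{16}$, i.e.\ $10\geq 3\pi$.

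The main obstacle is handling the troublesome linear term $\psi\hat{\gamma}'(\theta-\pi)$ in Region~II: bounding it naively via Lemma~\ref{lma:tight estimate lemma} applied to $\hat{\gamma}$ at $\theta-\pi$ leaves an unabsorbed $-\hat{\gamma}(\theta-\pi)$, which under \eqref{eqn:energy stab cond b} can be as large as $-3\hat{\gamma}(\theta)$ and defeats the estimate. The decisive step is to apply Lemma~\ref{lma:tight estimate lemma} not to $\hat{\gamma}$ but to the composite non-negative function $h$ itself; since $|h''|\leq 4\sup|\hat{\gamma}''|$, this yields $|\psi\,h'(\theta)|\leq h(\theta)+2\sup|\hat{\gamma}''|\,\psi^2$, hence $h(\theta)-\psi\,h'(\theta)\geq -2\sup|\hat{\gamma}''|\,\psi^2$. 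This manoeuvre automatically respects the equality case of \eqref{eqn:energy stab cond b} (where $h(\theta^\ast)=0$ forces $h'(\theta^\ast)=0$, so the potential troublemaker vanishes at the critical point), and the remaining $\psi^2$ loss is comfortably absorbed by $A(\theta)\sin^2\psi$ via $\psi^2\leq\tfrac{\pi^2}{4}\sin^2\psi$, which is what makes the coefficient $\tfrac{5\pi^2}{8}$ of $\sup|\hat{\gamma}''|$ in $A(\theta)$ exactly sufficient.
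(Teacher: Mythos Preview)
Your proof is correct and follows essentially the same approach as the paper: the same one-sided bounds, the same splitting into $|\phi|\leq\pi/2$ and $|\phi-\pi|\leq\pi/2$, and---crucially---the same application of Lemma~\ref{lma:tight estimate lemma} to the non-negative function $h(\theta)=3\hat{\gamma}(\theta)-\hat{\gamma}(\theta-\pi)$ in Region~II. The only cosmetic difference is that the paper Taylor-expands the combined quantity $Q-P_0-\hat{\gamma}(\theta)$ directly (computing its second $\phi$-derivative), whereas you expand $\hat{\gamma}(\theta-\phi)$ alone and then control the leftover trigonometric combinations via the cubic bounds $|\phi+\sin\phi-\sin 2\phi|\leq\tfrac{7}{6}|\phi|^3$ and $|3\psi-\sin\psi-\sin 2\psi|\leq\tfrac{3}{2}|\psi|^3$.
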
 

\begin{proof}Firstly, we notice that the lower bound of $Q(\phi,\theta)$ can be obtained by 
    \begin{align}
        Q(\phi,\theta)+P_0(\phi,\theta)+\hat{\gamma}(\theta)&=\hat{\gamma}(\theta-\phi)+ \hat{\gamma}(\theta)(2+\cos\phi)-\hat{\gamma}^\prime(\theta)(\sin\phi+\sin 2\phi)\nonumber\\
        &\geq\hat{\gamma}(\theta-\phi)+\hat{\gamma}(\theta)-\hat{\gamma}^\prime(\theta)(1+2\cos\phi)\sin\phi\nonumber\\
        &\geq \hat{\gamma}(\theta)-3|\hat{\gamma}^\prime(\theta)||\sin\phi|\nonumber\\
        &\geq - \frac{9}{2}\sup\limits_{2\pi\mathbb{T}} | \hat{\gamma}^{\prime\prime}|\, \sin^2\phi,
    \end{align}
where the last inequality comes from \eqref{eqn:poly of y}. Furthermore, using the fact that $A(\theta)\geq \frac{9}{2}\sup\limits_{2\pi\mathbb{T}} | \hat{\gamma}^{\prime\prime}|$ and $P_{A(\theta)}(\phi,\theta) = P_0(\phi,\theta)+A(\theta)\sin^2\phi$, we have \begin{equation}
    Q(\phi,\theta)\geq -P_{A(\theta)}(\phi,\theta)-\hat{\gamma}(\theta),\qquad\forall\phi\in 2\pi\mathbb{T}.
\end{equation}

For the other direction of the inequality, we first observe that $Q(0, \theta) - P_0(0,\theta)-\hat{\gamma}(\theta) = 0$, and $Q(\pi, \theta) - P_0(\pi,\theta)-\hat{\gamma}(\theta) = \hat{\gamma}(\theta-\pi)-3\hat{\gamma}(\theta)\leq 0$. Thus we divide it into the following two cases:

Case 1: For $|\phi|\leq\frac{\pi}{2}$. Apply the mean value theorem to $Q(\cdot, \theta) - P_0(\cdot,\theta)-\hat{\gamma}(\theta)$ on $[\phi, 0]$, we know there exists a $\xi\in [\phi, 0], |\xi|\leq \frac{\pi}{2}$ such that
 \begin{align}\label{eqn:case 1}
    &Q(\phi,\theta)-P_0(\phi,\theta)-\hat{\gamma}(\theta)\nonumber\\
    &=\frac{1}{2} \left(\hat{\gamma}^{\prime\prime}(\theta-\xi) - \hat{\gamma}(\theta)\cos\xi + \hat{\gamma}^\prime(\theta)\sin\xi - 4\hat{\gamma}^\prime(\theta)\sin 2\xi\right)\phi^2\nonumber\\
    &\leq \frac{1}{2} \left(\sup_{2\pi\mathbb{T}}|\hat{\gamma}^{\prime\prime}|+5|\hat{\gamma}^\prime(\theta)| + \hat{\gamma}(\theta)\right)\left(\frac{\pi^2}{4}\sin^2\phi\right)\\
    &\leq A(\theta)\sin^2\phi.\nonumber
\end{align}

    Case 2: For $|\phi-\pi|<\frac{\pi}{2}$. By the condition \eqref{eqn:energy stab cond b}, we know $3\hat{\gamma}(\theta)-\hat{\gamma}(\theta-\pi)$ is a non-negative $C^2$ function. Using Lemma~\ref{lma:tight estimate lemma} to $\hat{\gamma}(\theta-\pi)-3\hat{\gamma}(\theta)$, we have 
    \begin{align*}
        &\left|\left(3\hat{\gamma}^\prime(\theta)-\hat{\gamma}^\prime(\theta-\pi)\right) (\phi - \pi)\right|\\
        &\leq \left(3\hat{\gamma}(\theta) - \hat{\gamma}(\theta-\pi)\right) + \frac{\sup\limits_{\theta\in 2\pi\mathbb{T}}\left|3\hat{\gamma}^{\prime\prime}(\theta) - \hat{\gamma}^{\prime\prime}(\theta-\pi)\right|}{2}(\phi-\pi)^2\\
        &\leq \left(3\hat{\gamma}(\theta) - \hat{\gamma}(\theta-\pi)\right) + 2\sup\limits_{2\pi\mathbb{T}}|\hat{\gamma}^{\prime\prime}|(\phi-\pi)^2, \quad \forall |\phi-\pi|< \frac{\pi}{2}.
    \end{align*}

    Apply the mean value theorem to $Q(\cdot, \theta) - P_0(\cdot,\theta)-\hat{\gamma}(\theta)$ on $[\phi, \pi]$, there exists a $\xi\in [\phi, \pi]$ with $|\xi-\pi|< \frac{\pi}{2}$ such that 
    \begin{align}\label{eqn:case 2}
        &Q(\phi,\theta)-P_0(\phi,\theta)-\hat{\gamma}(\theta)\nonumber\\
        & = \left(\hat{\gamma}(\theta-\pi)-3\hat{\gamma}(\theta)\right)+ \left(-\hat{\gamma}^\prime(\theta-\pi) + 3 \hat{\gamma}^\prime(\theta)\right)(\phi-\pi)\nonumber\\
        & \quad + \frac{1}{2}\left(\hat{\gamma}^{\prime\prime}(\theta-\xi) - \hat{\gamma}(\theta)\cos\xi + \hat{\gamma}^\prime(\theta)\sin\xi - 4\hat{\gamma}^\prime(\theta)\sin 2\xi\right)(\phi-\pi)^2\nonumber\\
        & \leq \left(\hat{\gamma}(\theta-\pi)-3\hat{\gamma}(\theta)\right)+ \left(3\hat{\gamma}(\theta) - \hat{\gamma}(\theta-\pi)\right)\nonumber\\
        & \quad + \frac{1}{2}\left(\sup_{2\pi\mathbb{T}}|\hat{\gamma}^{\prime\prime}| + 5|\hat{\gamma}^\prime(\theta)|+ \hat{\gamma}(\theta) + 4\hat{\gamma}^\prime(\theta)\sin 2\xi + 4\sup_{2\pi\mathbb{T}}|\hat{\gamma}^{\prime\prime}|\right)(\phi-\pi)^2\nonumber\\
        & \leq \frac{\pi^2}{8}\left(5\sup_{2\pi\mathbb{T}}|\hat{\gamma}^{\prime\prime}| + 5|\hat{\gamma}^\prime(\theta)|+ \hat{\gamma}(\theta) \right)\sin^2\left(\phi-\pi\right) = A(\theta)\sin^2\phi.
    \end{align}
    
    Combining \eqref{eqn:case 1} with \eqref{eqn:case 2} yields \begin{equation}
        Q(\phi,\theta)\leq P_0(\phi,\theta)+\hat{\gamma}(\theta)+A(\theta)\sin^2\phi = P_{A(\theta)}(\phi,\theta)+\hat{\gamma}(\theta),\quad\forall\phi\in 2\pi\mathbb{T}.
    \end{equation} This completes the proof.\qed
\end{proof}

With the help of the sharp estimate given in Lemma~\ref{lma:est of Q}, Theorem~\ref{thm:minimal stab func well-defined} is ready to be proven. 

\begin{proof}\textit{(Existence of the minimal stabilizing function)}
    By Lemma~\ref{lma:est of Q}, we have \begin{equation}
       Q^2(\phi,\theta)\leq \left(P_{A(\theta)}(\phi,\theta)+\hat{\gamma}(\theta)\right)^2
   \end{equation} Recall the definition of $P_\alpha(\phi,\theta)$ in \eqref{eqn:auxiliary func P}, we have $P_\alpha(\phi,\theta) = P_0(\phi,\theta)+ \alpha\sin^2\phi$, and further
   \begin{equation}
       \begin{aligned}
           &4\hat{\gamma}(\theta)P_\alpha(\phi,\theta)-Q^2(\phi,\theta)\\
           &\geq 4\hat{\gamma}(\theta)P_{A(\theta)}(\phi,\theta)-\left(P_{A(\theta)}(\phi,\theta)+\hat{\gamma}(\theta)\right)^2 + 4\hat{\gamma}(\theta)\left(\alpha-A(\theta)\right)\sin^2\phi\\
           & = -\left(P_{A(\theta)}(\phi,\theta)-\hat{\gamma}(\theta)\right)^2 + 4\hat{\gamma}(\theta)\left(\alpha-A(\theta)\right)\sin^2\phi\\
           & = \left(- \left(-2\hat{\gamma}^\prime(\theta)\cos \phi + A(\theta)\sin \phi\right)^2 + 4\hat{\gamma}(\theta)\left(\alpha-A(\theta)\right)\right)\sin^2\phi\\
           &\geq \left[4\hat{\gamma}(\theta)\alpha-A^2(\theta)-4|\hat{\gamma}^\prime(\theta)|^2-4\hat{\gamma}(\theta)A(\theta)\right]\sin^2\phi.
       \end{aligned}
   \end{equation} 
    The last inequality follows from the fact that $|a\cos\phi + b\sin\phi|\leq \sqrt{a^2+b^2}$.

   Therefore, for any $\alpha\geq \frac{1}{4\hat{\gamma}(\theta)}\left[A^2(\theta)+4\hat{\gamma}(\theta)A(\theta)+4|\hat{\gamma}^{\prime}(\theta)|^2\right]$, we have \begin{equation*}
       4\hat{\gamma}(\theta)P_\alpha(\phi,\theta)-Q^2(\phi,\theta)\geq 0,\qquad\forall\phi\in 2\pi\mathbb{T}.
   \end{equation*}

   Which implies that \begin{equation}
       k_0(\theta)\leq \frac{1}{4\hat{\gamma}(\theta)}\left[A^2(\theta)+4\hat{\gamma}(\theta)A(\theta)+4|\hat{\gamma}^{\prime}(\theta)|^2\right]<\infty.
   \end{equation}\qed
\end{proof}

\section{Numerical results}\label{sec:numer}

In this section, numerical experiments are presented to demonstrate the high performance of the proposed SP-PFEM. We illustrate the efficiency/accuracy using a convergence test, and verify the structure-preserving properties of the proposed method, i.e. area conservation and unconditional energy stability.

In the convergence tests, the following two types of anisotropies are considered: \begin{itemize}
    \item Case I: $\hat{\gamma}(\theta)=1+\beta\cos 3\theta$ with $|\beta|<1$. It is weakly anisotropic when $|\beta|<\frac{1}{8}$ and strongly anisotropic otherwise;\\
    \item Case II: $\hat{\gamma}(\theta)=\sqrt{\left(\frac{5}{2}+\frac{3}{2}\,\text{sgn}(n_1)\right)n_1^2+n_2^2}$, here $(n_1,n_2)^T=(-\sin\theta,\cos\theta)^T$.
\end{itemize}

The schemes formally have the quadratic convergence rate in space and linear convergence rate in time, so the uniform time step $\tau$ is chosen as $\tau=h^2$, unless it is stated otherwise. 

We adopt the manifold distance \cite{zhao2021energy,li2021energy} \begin{equation}
    M(\Gamma_1,\Gamma_2)\coloneqq |(\Omega_1\backslash\Omega_2)\cup(\Omega_2\backslash\Omega_1)|=2|\Omega_1\cup\Omega_2|-|\Omega_1|-|\Omega_2|,
\end{equation} to measure the distance between two curves $\Gamma_1,\Gamma_2$, where $\Omega_i (i=1,2)$ represent the interior regions of $\Gamma_i$ and $|\Omega_i|$ denotes its area.

Let $\Gamma^m$ be the numerical approximation of $\Gamma^h(t=t_m\coloneqq m\tau)$, the numerical error is given as \begin{equation}
    e^h(t)\Bigl|_{t=t_m}\coloneqq M(\Gamma^m,\Gamma(t=t_m)).
\end{equation} Since the exact solution cannot be obtained analytically, we numerically approximated $\Gamma(t=t_m)$ using fine meshes $h_e=2^{-8},\tau_e=2^{-16}$ in \eqref{eqn:SP-PFEM}.

Following indicators are introduced to numerically demonstrate mesh quality, area conservation and energy stability: the weighted mesh ratio \begin{equation}
    R_{\gamma}^h(t)\coloneqq \frac{\max\limits_{1\leq j\leq N}\hat{\gamma}(\theta_j)|\boldsymbol{h}_j|}{\min\limits_{1\leq j\leq N}\hat{\gamma}(\theta_j)|\boldsymbol{h}_j|},
\end{equation} the normalized area loss and the normalized energy for closed curves, \begin{equation}
    \left.\frac{\Delta A^h_c(t)}{A^h_c(0)}\right|_{t=t_m}\coloneqq \frac{A^m_c-A^0_c}{A_c^0},\qquad\left.\frac{W^h_c(t)}{W^h_c(0)}\right|_{t=t_m}\coloneqq \frac{W^m_c}{W^0_c}.
\end{equation}

In the following simulations, the initial shapes are always chosen as an ellipse with major axis 4 and minor axis 1, except it is stated otherwise. In Newton's iteration, the tolerance value is set to be $\text{tol}=10^{-12}$.

The minimal stabilizing function $k_0(\theta)$ is obtained as follows: we solve the optimization problem \eqref{eqn:minimal stab func} for $\theta_j=-\pi+j\frac{\pi}{10},0\leq j\leq 20$ to determine $k_0(\theta_j)$, then do linear interpolation for the intermediate points. 

We first validate the upper bound of $k_0(\theta)$ provided in Theorem~\ref{thm:minimal stab func well-defined} (cf. Fig.~\ref{fig:upperbound}). Here, $K(\theta)$ denotes the upper bound given in \eqref{eqn:upper bound}.

\begin{figure}[htbp]
    \centering
    \includegraphics[width=1\textwidth]{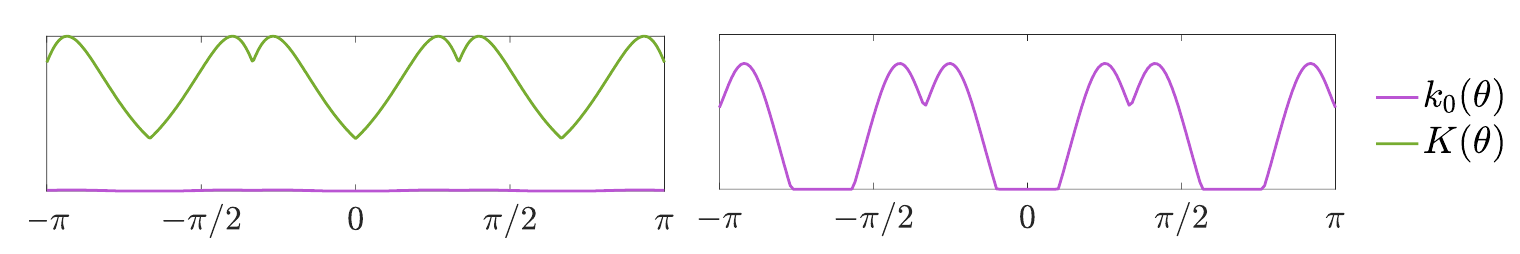}
    \caption{Minimal stabilizing function $k_0(\theta)$ and upper bound $K(\theta)$ in \eqref{eqn:upper bound} for Case I with $\beta=\frac{1}{2}$.}
    \label{fig:upperbound}
\end{figure}

\begin{figure}[htbp]
    \centering
    \includegraphics[width=0.5\textwidth]{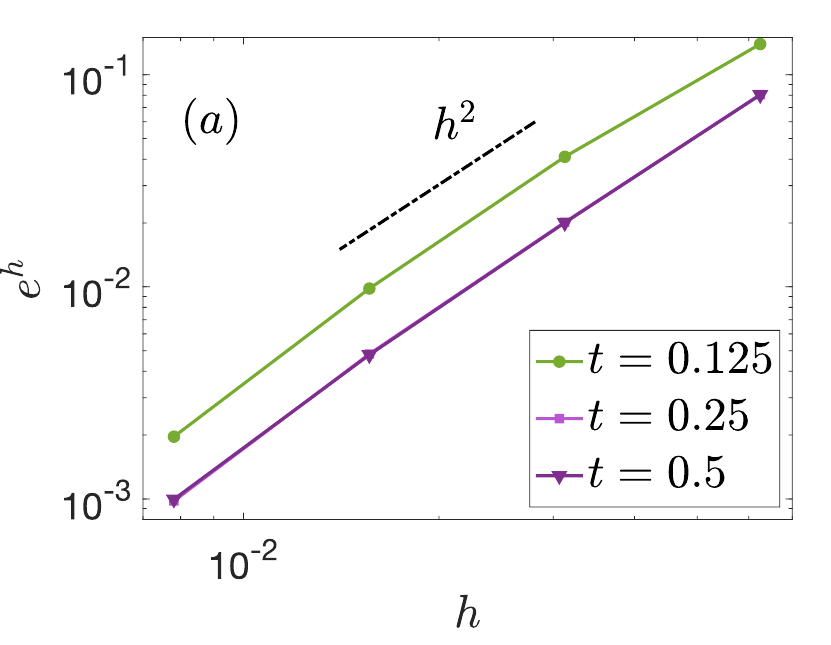}\includegraphics[width=0.5\textwidth]{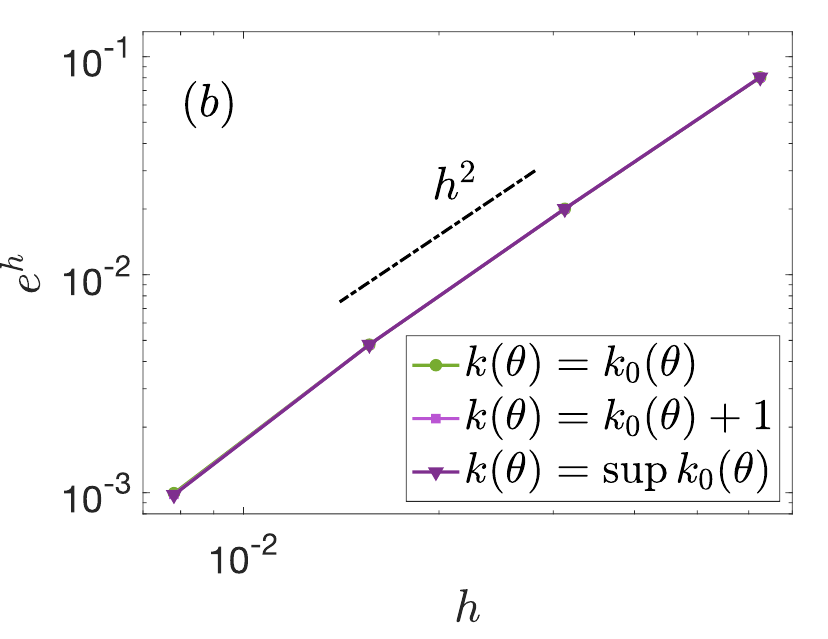}\\\includegraphics[width=0.5\textwidth]{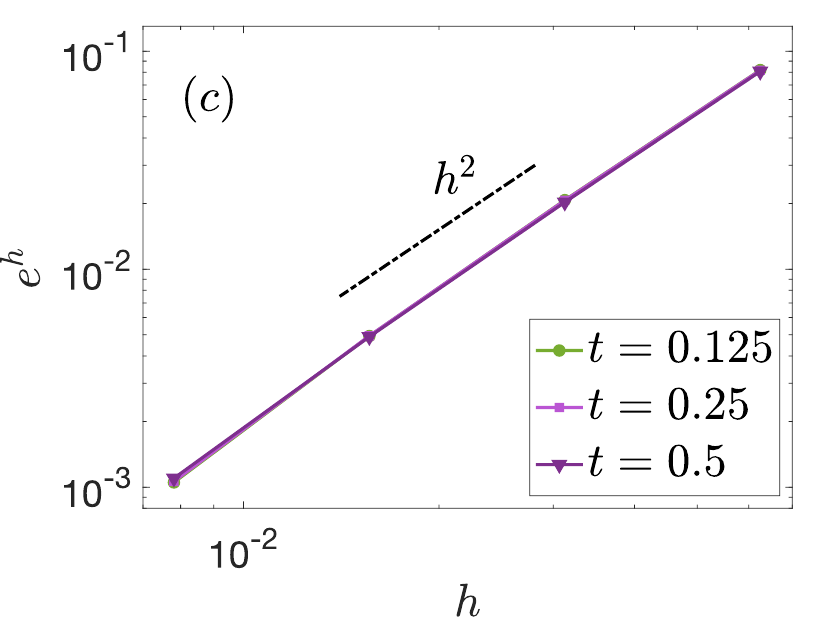}\includegraphics[width=0.5\textwidth]{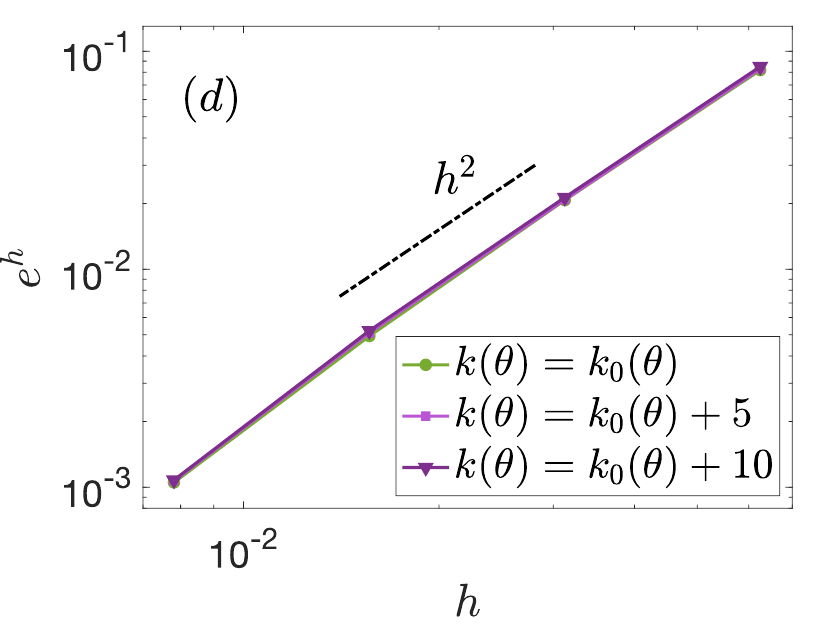}
    \caption{Convergence rates of the SP-PFEM \eqref{eqn:SP-PFEM} for Case I with $\beta=1/9$ (a) at different times with $k(\theta)=k_0(\theta)$, and (b) at $t=0.5$ with different $k(\theta)$; and for Case II (c) at different times with $k(\theta)=k_0(\theta)$, and (d) at $t=0.5$ with different $k(\theta)$.}
    \label{fig:convergent}
\end{figure}

\begin{figure}[htbp]
    \centering
    \includegraphics[width=0.5\textwidth]{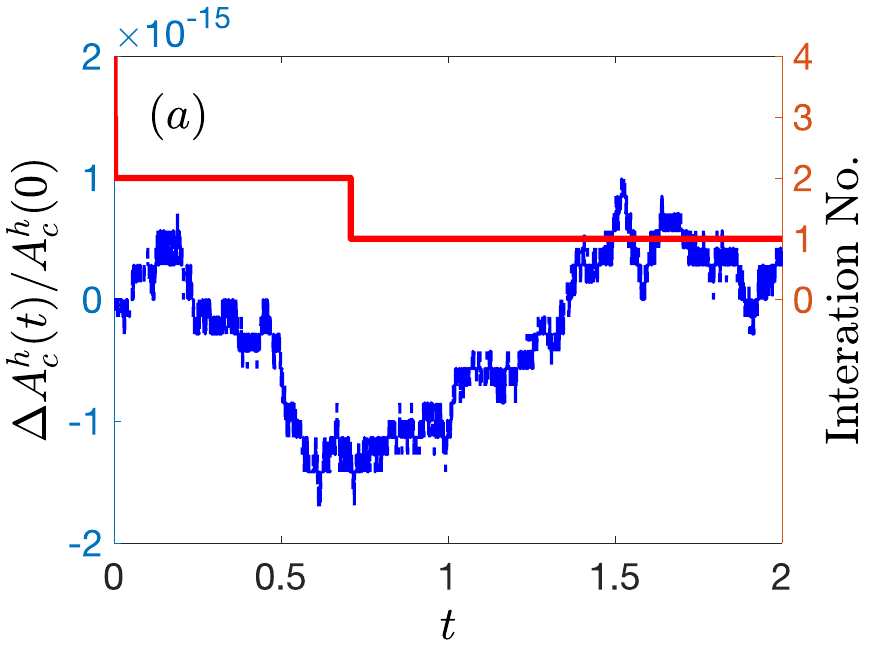}\includegraphics[width=0.5\textwidth]{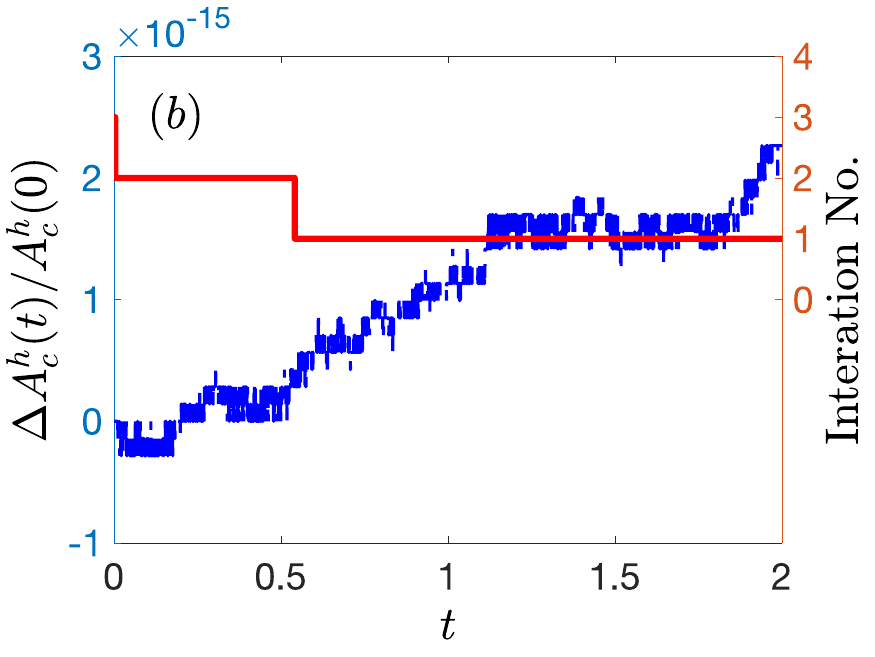}
    \caption{Normalized area loss (blue dash line) and iteration number (red line) of the SP-PFEM \eqref{eqn:SP-PFEM} with $k(\theta)=k_0(\theta)$ and $h=2^{-6},\tau=2^{-12}$ for (a) Case I with $\beta=1/2$; and for (b) Case II.}
    \label{fig:volume}
\end{figure}
    
\begin{figure}[htbp]
    \centering
    \includegraphics[width=0.5\textwidth]{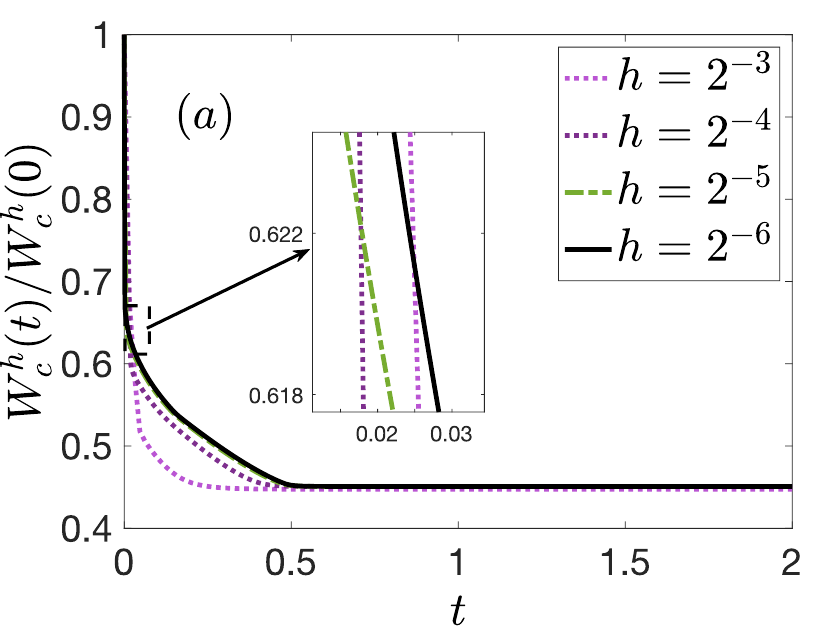}\includegraphics[width=0.5\textwidth]{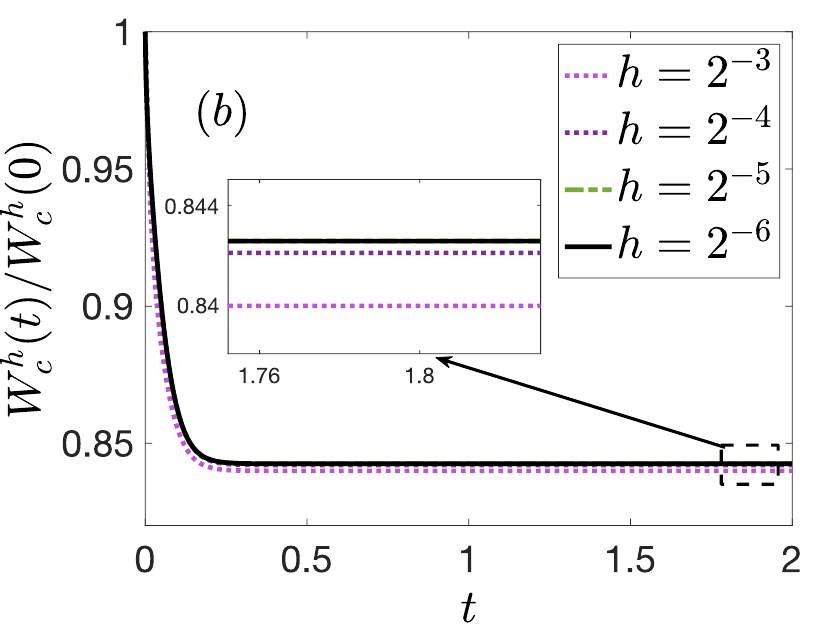}
    \caption{Normalized energy of the SP-PFEM \eqref{eqn:SP-PFEM} with $k(\theta)=k_0(\theta)$ for (a) Case I with $\beta=1/2$; and for (b) Case II.}
    \label{fig:energy}
\end{figure}

\subsection{Efficiency, accuracy and structure-preserving property}

It can be observed from Fig.~\ref{fig:convergent}--Fig.~\ref{fig:mesh} that: \begin{itemize}
    \item The SP-PFEM \eqref{eqn:SP-PFEM} possesses second-order spatial and first-order temporal accuracy (cf. Fig.~\ref{fig:convergent}).
    \item The normalized area loss is around $10^{-15}$, matching the order of the round-off error (cf. Fig.~\ref{fig:volume}). Thus the area is conserved up to the machine precision.
    \item Numbers of Newton's iteration descend to $2$ in a very short time, and finally $1$. This observation suggests that the fully implicit scheme can be solved with high computational efficiency (cf. Fig.~\ref{fig:volume}).
    \item The normalized energy is monotonically decreasing when $\hat{\gamma}(\theta)$ satisfies condition \eqref{eqn:energy stab cond b} (cf. Fig.~\ref{fig:energy}). For Case I, condition \eqref{eqn:energy stab cond b} requires $0<\beta\leq \frac{1}{2}$, Fig.~\ref{fig:energy} (a) shows that the proposed method \eqref{eqn:SP-PFEM} still guarantees energy dissipation when $\beta$ takes its maximum, confirming the conclusion of Fig.~\ref{thm:structure-preserving}. In contrast to the conclusion in \cite{bao2023symmetrized2D}, \eqref{eqn:SP-PFEM} also exhibits unconditional energy stability for asymmetric surface energies.
    \item The weighted mesh ratio $R^h_{\gamma}(t)$ tends to a constant as $t\to+\infty$, which suggests an asymptotic quasi-uniform mesh distribution of \eqref{eqn:SP-PFEM} (cf. Fig.~\ref{fig:mesh}).
\end{itemize}

\begin{figure}[htbp]
    \centering
    \includegraphics[width=0.5\textwidth]{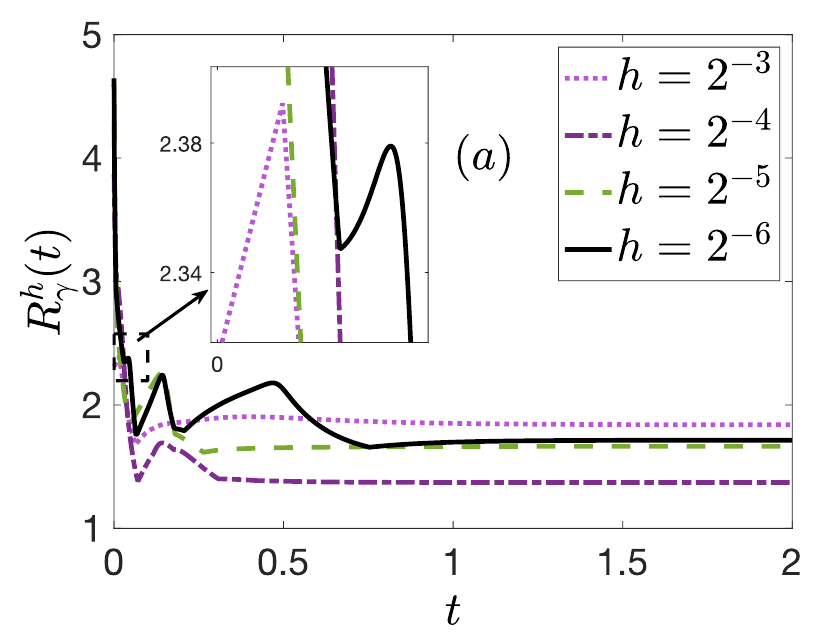}\includegraphics[width=0.5\textwidth]{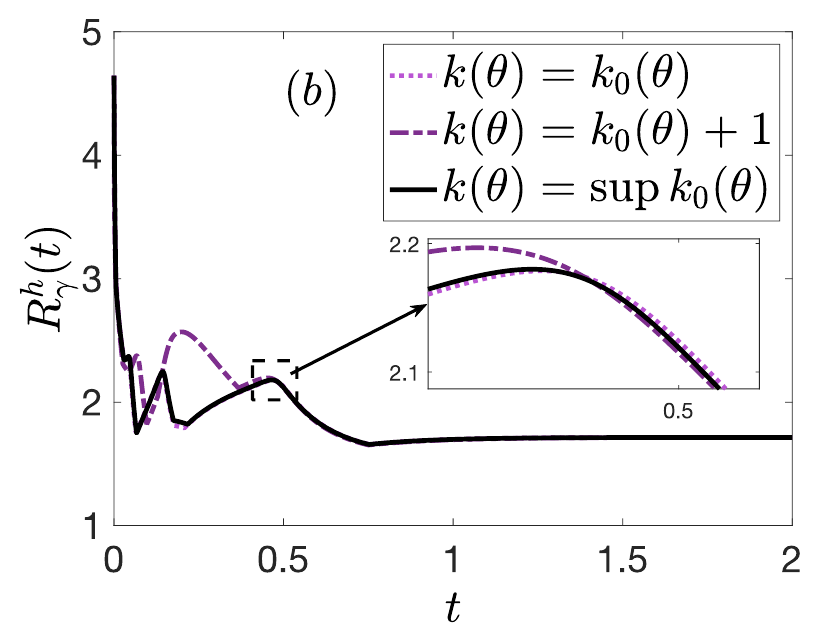}\\\includegraphics[width=0.5\textwidth]{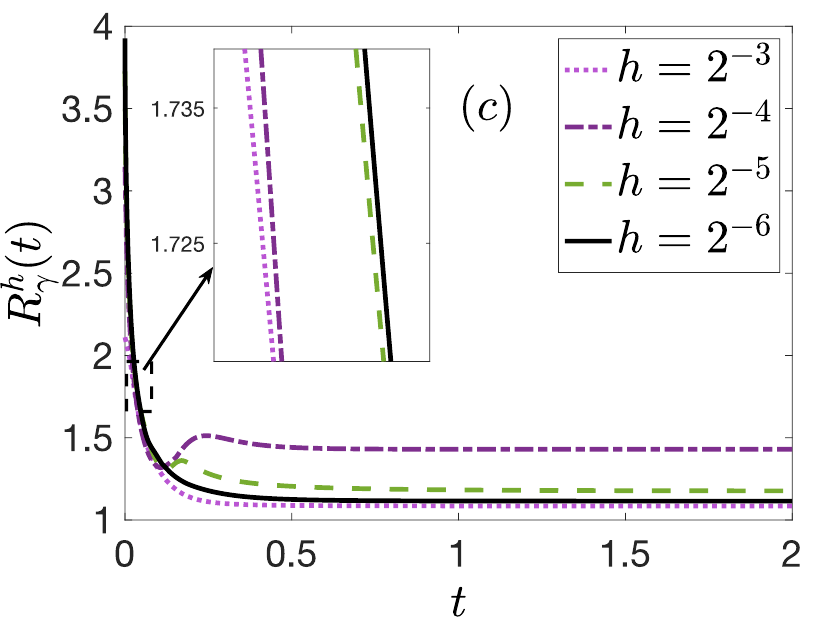}\includegraphics[width=0.5\textwidth]{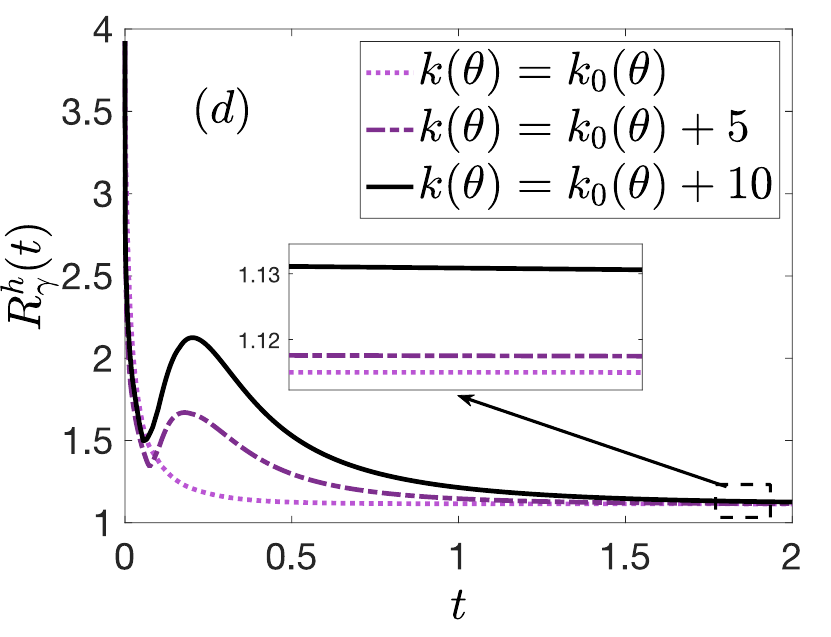}
    \caption{Weighted mesh ratio of the SP-PFEM \eqref{eqn:SP-PFEM} for Case I with $\beta=1/9$ is illustrated in (a) for $k(\theta)=k_0(\theta)$ with different $h$, and in (b) for different $k(\theta)$ with $h=2^{-6}$; and for Case II is presented in (c) for $k(\theta)=k_0(\theta)$ with different $h$, and in (d) for different $k(\theta)$ with $h=2^{-6}$.}
    \label{fig:mesh}
\end{figure}

\subsection{Application for morphological evolutions}

In the following we apply the SP-PFEM \eqref{eqn:SP-PFEM} to simulate the morphological evolutions of closed curves under anisotropic surface diffusion. The mesh size is chosen as $h=2^{-6},\tau=h^2$.\\

Fig.~\ref{fig:morphological evo} plots the morphological evolutions of an ellipse with major axis $4$ and minor axis $1$ under anisotropic surface diffusion with four different surface energies: \begin{enumerate}[label=(\alph*)]
    \item the $3$-fold anisotropy: $\hat{\gamma}(\theta)=1+\frac{1}{2}\cos 3\theta$;
    \item the piecewise BGN-anisotropy: $\hat{\gamma}(\theta)=\sqrt{\left(\frac{5}{2}+\frac{3}{2}\,\text{sgn}(n_1)\right)n_1^2+n_2^2}$, with $\boldsymbol{n}=(n_1,n_2)^T\coloneqq(-\sin\theta,\cos\theta)^T$;
    \item the $4$-fold anisotropy: $\hat{\gamma}(\theta)=1+\frac{1}{10}\cos 4\theta$;
    \item the regularized crystalline anisotropy: $\hat{\gamma}(\theta)=1+\sqrt{\varepsilon^2+(1-\varepsilon^2)\sin^2\frac{m}{2}\theta}$, with $\varepsilon=0.1,m=7$.
\end{enumerate}

\begin{figure}[t!]
    \centering
    \includegraphics[width=0.98\textwidth]{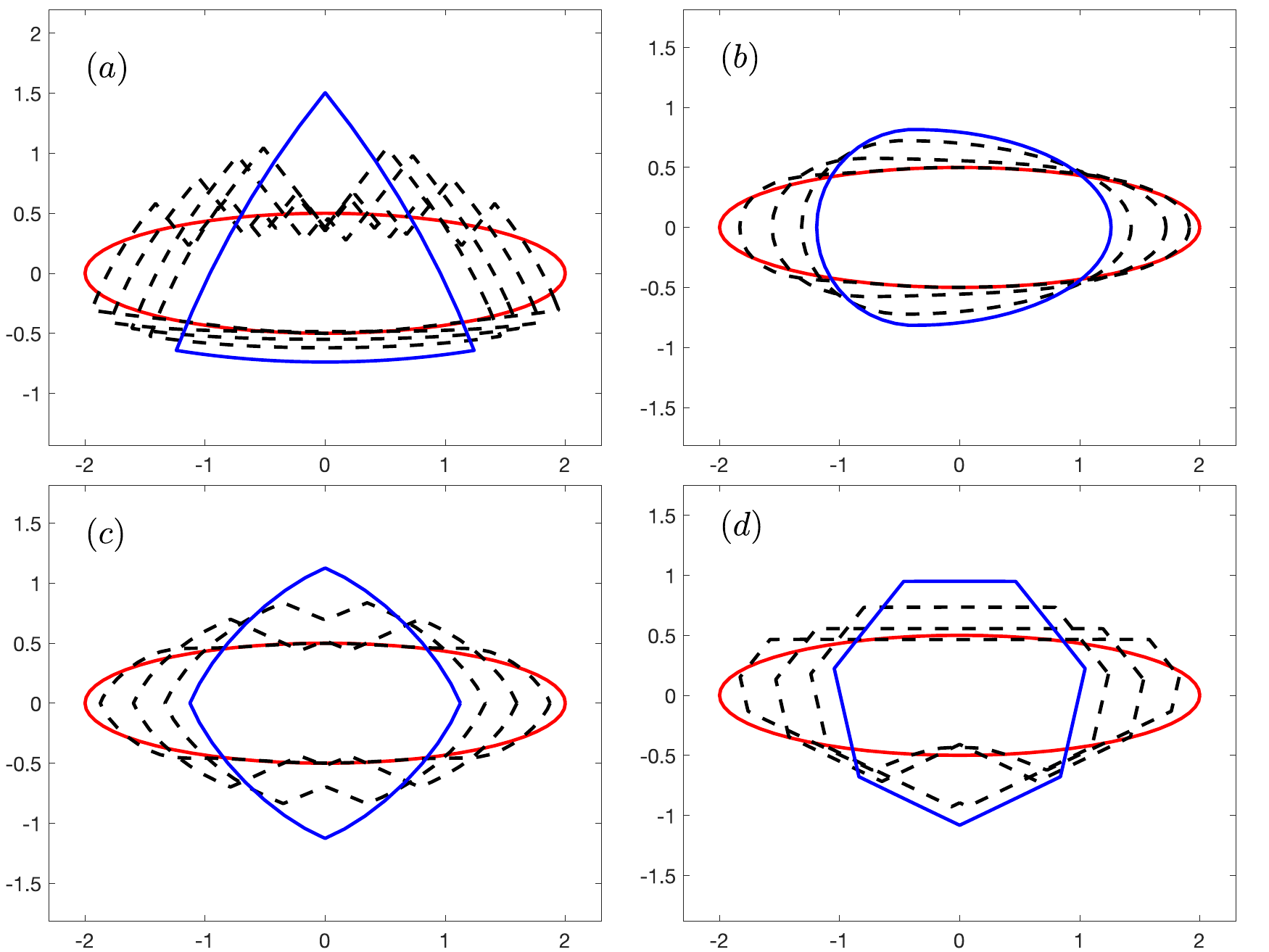}
    \caption{Morphological evolutions of an ellipse with major axis 4 and minor axis 1 under anisotropic surface diffusion with different surface energies: (a) Case I with $\beta=1/2$; (b) Case II; (c) the $4$-fold anisotropy $\hat{\gamma}(\theta)=1+\frac{1}{10}\cos 4\theta$; (d) the regularized crystalline anisotropy $\hat{\gamma}(\theta)=$ $\hat{\gamma}(\theta)=1+\sqrt{\varepsilon^2+(1-\varepsilon^2)\sin^2\frac{m}{2}\theta}$, with $\varepsilon=0.1,m=7$. The red and blue lines represent the initial shape and the numerical equilibrium, respectively; and the black dash lines represent the intermediate curves}
    \label{fig:morphological evo}
\end{figure}

In the above surface energies, anisotropy (c) is symmetric while others are asymmetric. In can be observed from Fig.~\ref{fig:morphological evo}, compared to the symmetrized SP-PFEM in \cite{bao2023symmetrized2D}, the proposed SP-PFEM \eqref{eqn:SP-PFEM} is not only applicable to symmetric surface energies, but also to asymmetric surface energies. Moreover, Fig.~\ref{fig:morphological evo} (b) indicates that the proposed method also performs effectively for $\hat{\gamma}(\theta)$ with lower regularity, i.e. globally $C^1$ and piecewise $C^2$. It demonstrates improved performance across a broader range of surface energies.\\

\subsection{Multiple equilibria for open curves in solid-state dewetting}

Following similar derivations in \cite{li2021energy,li2023symmetrized,zhang2024stabilized}, the SP-PFEM \eqref{eqn:SP-PFEM} can also be extended to simulate the morphological evolutions of open curves in solid-state dewetting. The numerical scheme is analogous to \cite[(5.3)]{li2023symmetrized} or \cite[(6.6)]{zhang2024stabilized}. Details are omitted here for brevity. The same as Theorem~\ref{thm:structure-preserving}, the corresponding method is area conservative and proven to be unconditionally energy-stable under \eqref{eqn:energy stab cond b}.

\begin{figure}[htbp]
    \centering
    \includegraphics[width=1.0\textwidth]{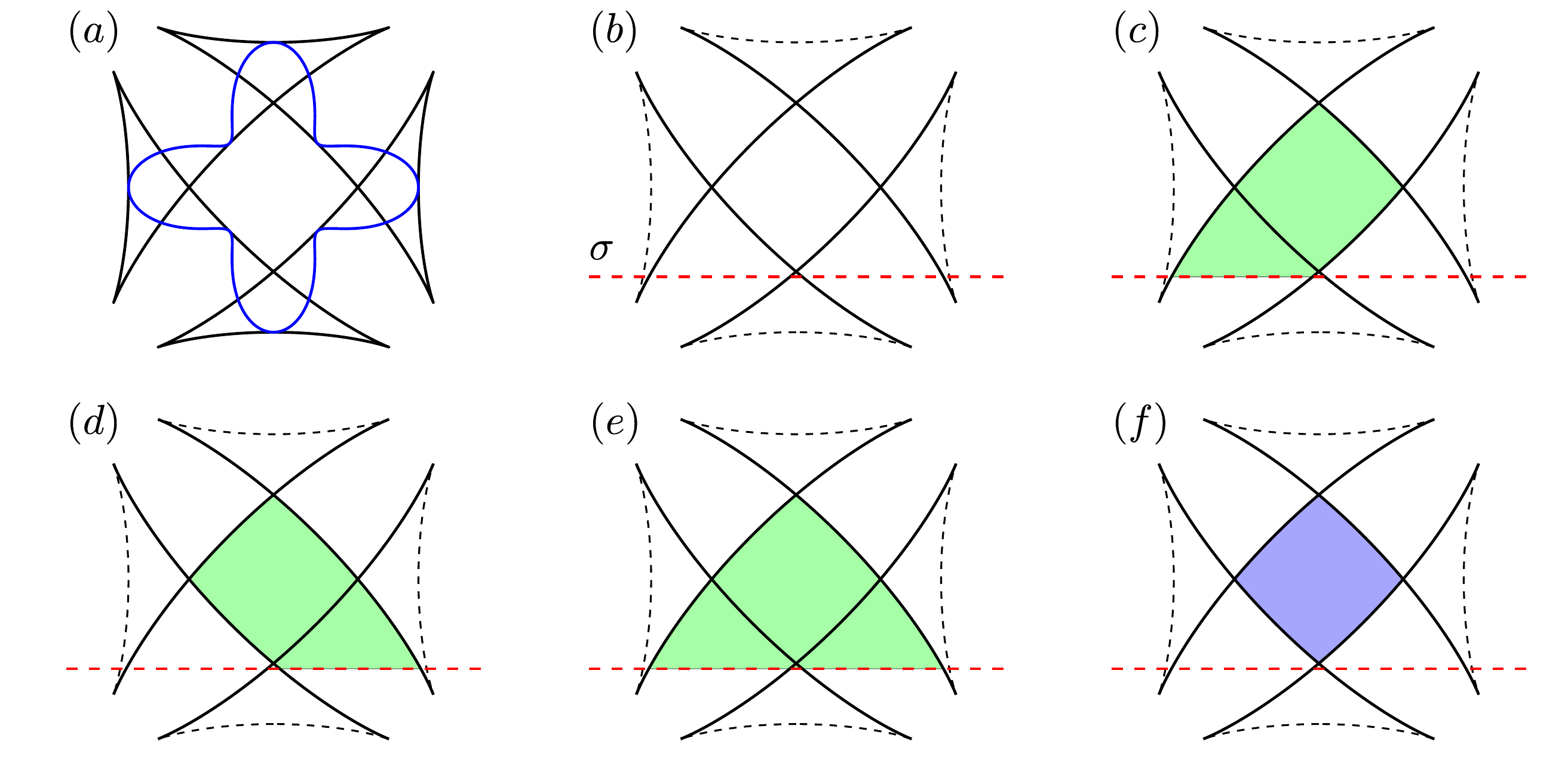}
    \caption{Generalized Winterbottom construction for surface energy $\hat{\gamma}(\theta)=1+0.4\cos 4\theta$ and $\sigma=\cos\frac{5\pi}{6}$. Green shaded and blue shaded regions are the stable equilibrium shapes.}
    \label{fig:winterbottom construction}
\end{figure}

As it has been derived in \cite{bao2017stable}, stable equilibrium shapes of two-dimensional solid-state dewetting with anisotropic surface energies can be predicted through an approach called \textit{generalized Winterbottom construction}. If the surface energy is strongly anisotropic, there may exist multiple stable equilibrium island shapes. The generalized Winterbottom construction offers a simple way to construct all possible stable equilibrium shapes via the following steps \cite{bao2017stable}: \begin{enumerate}[label=(\roman*)]
    \item \textit{Draw the Wulff envelope}: for a given anisotropy $\hat{\gamma}(\theta)$, draw its $\gamma$-plot (blue line in Fig.~\ref{fig:winterbottom construction} (a)). Obtain the Wulff envelope (black line in Fig.~\ref{fig:winterbottom construction} (a)) from the $\gamma$-plot by the following formula \cite{peng1998geometry,wang2015sharp}: \begin{equation}
        \begin{array}{ll}
           \left\{\begin{aligned}
              &x(\theta)=-\hat{\gamma}(\theta)\sin\theta-\hat{\gamma}^\prime(\theta)\cos\theta, \\
              &y(\theta)=\hat{\gamma}(\theta)\cos\theta+\hat{\gamma}^\prime(\theta)\sin\theta,
           \end{aligned}\right.  &  \theta\in 2\pi\mathbb{T}.
        \end{array}
    \end{equation}
    \item \textit{Remove all unstable orientations}: Remove all unstable orientations from the Wulff envelope for which $\hat{\gamma}(\theta)+\hat{\gamma}^{\prime\prime}(\theta)<0$ (black dash line in Fig.~\ref{fig:winterbottom construction} (b)). Only “ears” in the Wulff envelope is unstable (cf. Fig.~\ref{fig:winterbottom construction}).
    \item \textit{Truncate the Wulff envelope}: Truncate the Wulff envelope with flat substrate line $y=\sigma$ (red dash line in Fig.~\ref{fig:winterbottom construction}(b)--Fig.~\ref{fig:winterbottom construction}(f)). Then stable equilibria are regions enclosed by the Wulff envelope and the substrate line (green shaded regions in Fig.~\ref{fig:winterbottom construction}(c)--Fig.~\ref{fig:winterbottom construction}(e) and blue shaded regions in Fig.~\ref{fig:winterbottom construction} (f)).
\end{enumerate}

We applied our method to simulate equilibria of different initial thin films in solid-state dewetting with the $m$-fold surface energy $\hat{\gamma}(\theta)=1+\beta\cos m(\theta-\theta_0)$. It is strongly anisotropic when $|\beta|>\frac{1}{m^2-1}$. Results are exhibited in Fig.~\ref{fig:winterbottom construction rot} and Fig.~\ref{fig:winterbottom construction 3fold rot}. The numerical equilibrium shapes are colored in blue, the initial curves are displayed in red dash-dot line, and the corresponding Wulff envelope is shown in black dash line.

\begin{figure}[htbp]
\centering
\includegraphics[width=1.0\textwidth]{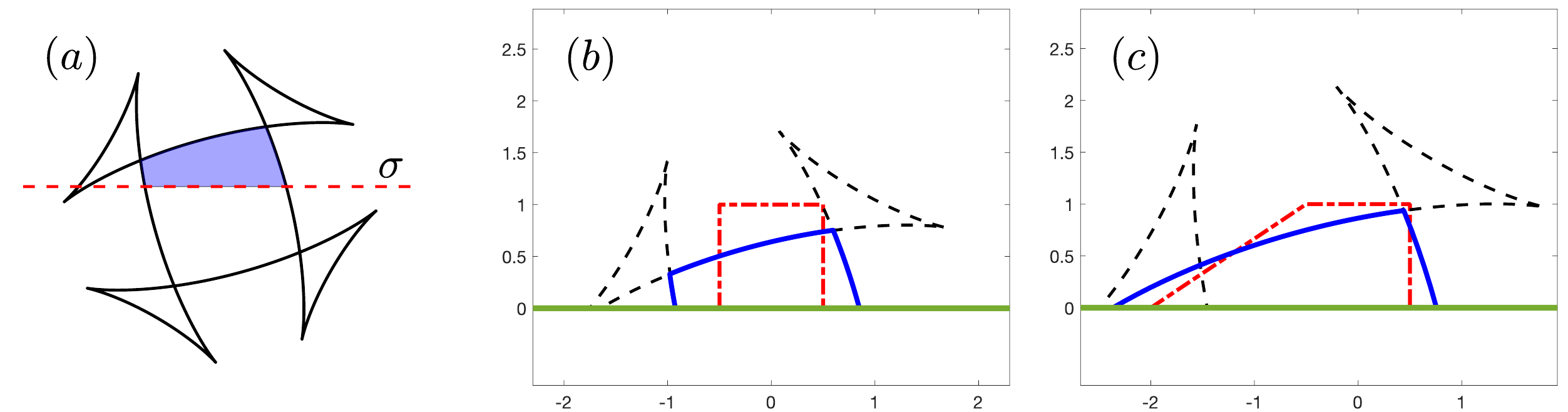}
\caption{Illustration of the generalized Winterbottom construction (a) and numerical equilibrium shapes (b)--(c) starting with different initial curves for the $4$-fold surface energy $\hat{\gamma}(\theta)=1+0.3\cos 4(\theta+\frac{\pi}{6})$ and $\sigma=0.2$.}
\label{fig:winterbottom construction rot}
\end{figure}

\begin{figure}[htbp]
\centering
\includegraphics[width=1.0\textwidth]{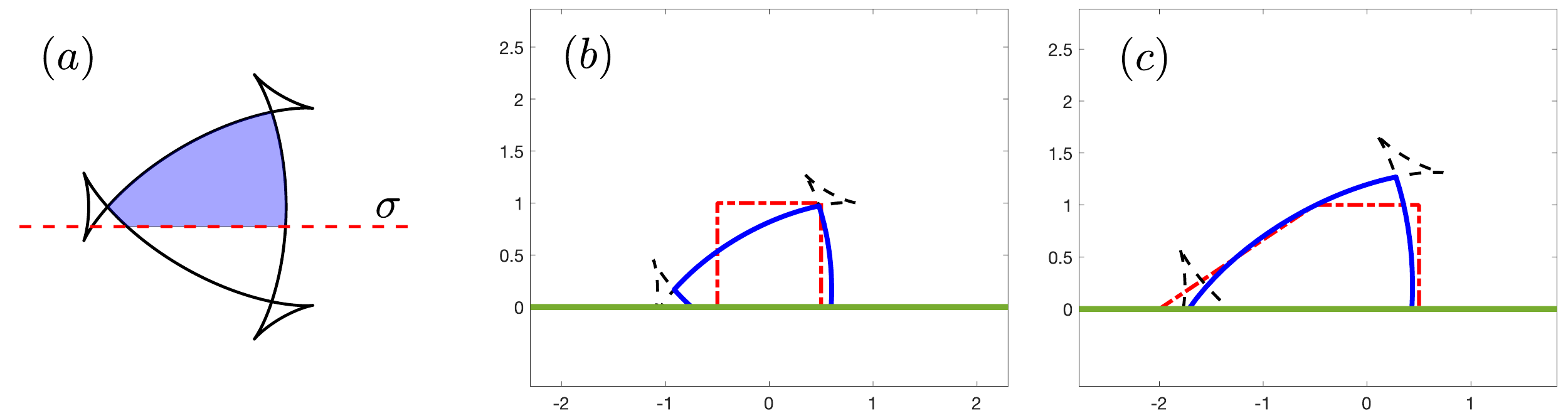}
\caption{Illustration of the generalized Winterbottom construction (a) and numerical equilibrium shapes (b)--(c) starting with different initial curves for the $3$-fold surface energy $\hat{\gamma}(\theta)=1+0.3\cos 3(\theta+\frac{\pi}{6})$ and $\sigma=-0.2$.}
\label{fig:winterbottom construction 3fold rot}
\end{figure}

Fig.~\ref{fig:winterbottom construction rot} and Fig.~\ref{fig:winterbottom construction 3fold rot} exhibit the generalized Wulff construction and the numerical equilibria by the SP-PFEM for following two asymmetric energies: \begin{itemize}
    \item Fig.~\ref{fig:winterbottom construction rot}: $\hat{\gamma}(\theta)=1+0.3\cos 4\left(\theta+\frac{\pi}{6}\right)$, $\sigma=0.2$;
    \item Fig.~\ref{fig:winterbottom construction 3fold rot}: $\hat{\gamma}(\theta)=1+0.3\cos 3\left(\theta+\frac{\pi}{6}\right)$, $\sigma=-0.2$.
\end{itemize} It can be observed from Fig.~\ref{fig:winterbottom construction rot} and Fig.~\ref{fig:winterbottom construction 3fold rot} that the numerical results and the theoretical predictions coincides very well. Unlike the symmetrized SP-PFEM in \cite{li2023symmetrized}, our method demonstrates strong performance for asymmetric surface energies. 

\section{Conclusions}\label{sec:conclusion}

We proposed a structure-preserving parametric finite element method (SP-PFEM) with optimal energy stability condition $3\hat{\gamma}(\theta)-\hat{\gamma}(\theta-\pi)\geq 0$ for anisotropic surface diffusion in two dimensions. By utilizing a symmetric surface energy matrix $\hat{\boldsymbol{Z}}_k(\theta)$, the governing equation of anisotropic surface diffusion is reformulated into a conservative weak formulation. A PFEM using piecewise linear functions for spatial discretization and an implicit-explicit Euler method for the temporal discretization is employed for this weak formulation. The SP-PFEM is second-order accurate in space, first-order accurate in time and proven to be area conservative at the fully discrete level. With a sharp estimate lemma, we established a global upper bound to explicitly characterize the existence of the minimal stabilizing function $k_0(\theta)$ under the condition $3\hat{\gamma}(\theta)-\hat{\gamma}(\theta-\pi)\geq 0$, without requiring any additional condition. Then a required local energy estimate is established, demonstrating that our method is unconditionally energy-stable with sufficiently large $k(\theta)\geq k_0(\theta)$. Since this condition is also a necessary condition for the local energy estimate, our method is considered optimal and cannot be further improved. Extensive numerical results are presented to demonstrate the efficiency, accuracy and structure-preserving properties of the proposed method. Additionally, the proposed method can be transformed into the symmetrized SP-PFEM from \cite{bao2023symmetrized2D} under the $\gamma(\boldsymbol{n})$ formulation. Our analysis indicates that the symmetry condition in \cite{bao2023symmetrized2D} can also be improved to $3\gamma(\boldsymbol{n})-\gamma(-\boldsymbol{n})\geq 0$.

\begin{acknowledgements}
The authors sincerely appreciate Professor Weizhu Bao for his valuable comments and suggestions. 

The work of Li is supported by Alexander von Humboldt Foundation.

The work of Ying is supported by Shanghai Science and Technology Innovation Action Plan (Basic Research Area) -- Project-ID 22JC1401700 and National Natural Science Foundation of China (Mathematical Sciences Division) -- Project-ID 12471342. The work is also partially supported by National Key R\&D Program of China -- Project-ID 2020YFA0712000 and the Fundamental Research Funds for the Central Universities.

The work of Zhang is supported by Chinese Scholar Council (CSC) -- State Scholarship Fund No.~202306230346 and Shanghai Jiao Tong University -- Zhiyuan Honors Program for Graduate Students -- Project-ID 021071910051. 
\end{acknowledgements}

%
%



\section*{Statements and Declarations}

\section*{Data Availability}
Data will be made available on request.

\section*{Author Contributions}
All authors contributed equally. All authors read and approved the final manuscript.

\section*{Conflict of Interest}
The authors declare that they have no conflict of interest.

\end{document}